\DeclareMathAlphabet{\mathpzc}{OT1}{pzc}{m}{it}
\theoremstyle{plain}
\newtheorem{theorem}{\bf{\textsc{Theorem}}}[section]
\newtheorem{theoremAlpha}{\bf{\textsc{Theorem}}}
\newtheorem{corollary}[theorem]{\bf{\textsc{Corollary}}}
\newtheorem{lemma}[theorem]{\bf{\textsc{Lemma}}}
\newtheorem{proposition}[theorem]{\bf{\textsc{Proposition}}}
\numberwithin{equation}{section}
\theoremstyle{definition}
\theoremstyle{definition}
\newtheorem{remark}[theorem]{\bf{\textsc{Remark}}}
\DeclareMathOperator{\Sym}{Sym}
\DeclareMathOperator{\ch}{ch}
\DeclareMathOperator{\proj}{pr}
\DeclareMathOperator{\Ad}{Ad}
\DeclareMathOperator{\sgn}{sgn}
\newcommand{\CC}{\mathbb{C}}
\newcommand{\RR}{\mathbb{R}}
\newcommand{\NN}{\mathbb{N}}
\newcommand{\sF}{\mathcal{F}}
\newcommand{\sH}{\mathcal{H}}
\newcommand{\sL}{\mathcal{L}}
\newcommand{\sN}{\mathcal{N}}
\newcommand{\sO}{\mathcal{O}}
\newcommand{\sR}{\mathcal{R}}
\newcommand{\frakg}{{\mathfrak g}}
\newcommand{\frakl}{{\mathfrak l}}
\newcommand{\frakn}{{\mathfrak n}}
\newcommand{\frakp}{{\mathfrak p}}
\newcommand{\frakt}{{\mathfrak t}}
\newcommand{\fraku}{{\mathfrak u}}
\newcommand{\frakz}{{\mathfrak z}}
\newcommand{\balpha}{{\boldsymbol\alpha}}
\newcommand{\bbeta}{{\boldsymbol\beta}}
\newcommand{\bgamma}{{\boldsymbol\gamma}}
\newcommand{\Set}[2]{\left\{#1\,\middle|\,#2\right\}}
\newcommand{\set}[2]{\{#1\,|\,#2\}}
\newcommand{\SymGp}{\mathfrak{S}}
\newcommand{\delbar}{\bar\partial}
\newcommand{\iCR}{{\bar D}}
\title[Hilbert series on generalized flag manifolds]
{Hilbert series of nearly holomorphic sections on generalized flag manifolds}
\author{Benjamin Schwarz} 
\subjclass[2010]{Primary 32A50; Secondary 32L10, 14M15, 22E46.}
\date{\today}
\address{Benjamin Schwarz, Universit\"{a}t Paderborn,
Fakult\"{a}t f\"{u}r Elektrotechnik, Informatik und Mathematik,
Institut f\"{u}r Mathematik, Warburger Str. 100,
33098 Paderborn, Germany}
\email{bschwarz@math.upb.de}
\begin{document}
\begin{abstract}
Let $X=G/P$ be a complex flag manifold and $E\to X$ be a $G$-homogeneous holomorphic vector bundle. Fix a $U$-invariant Kähler metric on $X$ with $U\subseteq G$ maximal compact. We study the sheaf of nearly holomorphic sections and show that the space of global nearly holomorphic sections in $E$ coincides with the space of $U$-finite smooth sections in $E$. The degree of nearly holomorphic sections defines a $U$-invariant filtration on this space. Using sheaf cohomology, we determine in suitable cases the corresponding Hilbert series. It turns out that this is given in terms of Lusztig's $q$-analog of Kostant's weight multiplicity formula, and hence gives a new representation theoretic interpretation of these formulas.
\end{abstract}


\maketitle



\section*{Introduction}
In \cite{S86} Shimura introduced the notion of nearly holomorphic functions on Kähler manifolds in the course of his investigation of automorphic forms on classical bounded symmetric domains. 
It turned out to be an ``indispensable tool for the algebraicity of the critical values of certain zeta functions'' \cite{S94}. In this article, we consider a generalized notion of nearly holomorphic sections of holomorphic vector bundles on Kähler manifolds. The first natural question to answer is the existence of non-trivial nearly holomorphic sections. Unlike the bounded symmetric case, for compact Kähler manifolds this is a highly non-trivial question. We obtain the following positive result on generalized flag manifolds by means of representation theoretic methods.

\begin{theoremAlpha}\label{thm:A}
	Let $X=G/P$ be a generalized flag manifold, and $E=G\times^PE_o$ be the $G$-homogeneous vector 
	bundle associated to a simple $P$-module $E_o$. Let $U\subseteq G$ be maximal compact, and $h$ 
	be a $U$-invariant Kähler metric on $X$. Then, the space $\sN(X,E)$ of nearly holomorphic 
	sections in $E$ coincides with the space of $U$-finite smooth sections in $E$,
	\[
		\sN(X,E) = C^\infty(X,E)_{U-\text{finite}}.
	\]
	In particular, $\sN(X,E)$ is a dense subspace of $C^\infty(X,E)$ (with respect to uniform 
	convergence).
\end{theoremAlpha}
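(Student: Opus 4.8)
The plan is to prove the two inclusions separately, using throughout the description of the degree filtration as $\sN^{\leq d}(X,E)=\ker\bigl(\iCR^{\,d+1}\bigr)$, where $\iCR^{\,j}\colon C^\infty(X,E)\to C^\infty(X,\Sym^jTX\otimes E)$ ($TX$ the holomorphic tangent bundle) is the iterated operator built from $\delbar$ and the Kähler metric $h$. Since $h$ is $U$-invariant, $\iCR$ and hence every $\iCR^{\,j}$ and every $\sN^{\leq d}(X,E)$ are $U$-equivariant, and each $\Sym^jTX\otimes E$ is the $G$-homogeneous holomorphic bundle with fibre the $(U\cap P)$-module $\Sym^j(\frakg/\frakp)\otimes E_o$.

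For $\sN(X,E)\subseteq C^\infty(X,E)_{U\text{-finite}}$ I would argue that each $\sN^{\leq d}(X,E)$ is a finite-dimensional $U$-module. Indeed, for $s\in\sN^{\leq d}(X,E)$ one has $\iCR(\iCR^{\,d}s)=0$, and on $C^\infty(X,\Sym^dTX\otimes E)$ the kernel of $\iCR$ is exactly $H^0(X,\sO(\Sym^dTX\otimes E))$ (contraction with $h$ being injective), while $\ker(\iCR^{\,d}|_{\sN^{\leq d}(X,E)})=\sN^{\leq d-1}(X,E)$; so $\iCR^{\,d}$ embeds $\sN^{\leq d}(X,E)/\sN^{\leq d-1}(X,E)$ into the finite-dimensional space $H^0(X,\sO(\Sym^dTX\otimes E))$ (finite-dimensional because $X$ is projective). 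Induction on $d$, starting from $\sN^{\leq 0}(X,E)=H^0(X,\sO(E))$, then finishes this inclusion, since $\sN(X,E)=\bigcup_d\sN^{\leq d}(X,E)$ and every element of it thus lies in a finite-dimensional $U$-submodule.

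The reverse inclusion is the real point. Writing $X=U/L$ with $L=U\cap P$ (and noting $E_o|_L$ is irreducible, as the unipotent radical of $P$ acts trivially on the simple $P$-module $E_o$), Frobenius reciprocity identifies the $V_\lambda$-isotypic component of $C^\infty(X,\Sym^jTX\otimes E)$ with $V_\lambda\otimes\operatorname{Hom}_L(V_\lambda,\Sym^j(\frakn^-_{\CC})\otimes E_o)$, using $\frakg/\frakp\cong\frakn^-_{\CC}$ as $L$-modules. I would then observe that $\bigoplus_{j}\operatorname{Hom}_L(V_\lambda,\Sym^j(\frakn^-_{\CC})\otimes E_o)=\bigl(\Sym^\bullet(\frakn^-_{\CC})\otimes E_o\otimes V_\lambda^*\bigr)^L$ is a finitely generated module (Hilbert's theorem, $L_{\CC}$ reductive) over the ring $\Sym^\bullet(\frakn^-_{\CC})^L$, which is just $\CC$: a cocharacter $\CC^\times\to Z(L)^\circ$ defining $P$ acts on $\frakn^-_{\CC}$ with all weights strictly negative, so $\Sym^\bullet(\frakn^-_{\CC})$ has no nonconstant invariants. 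Hence the sum above is finite-dimensional, so $C^\infty(X,\Sym^jTX\otimes E)[\lambda]=0$ for all $j$ beyond some $j_0=j_0(\lambda)$. Applying the $U$-equivariant operator $\iCR^{\,j_0}$ to the isotypic component $C^\infty(X,E)[\lambda]$ then lands in the zero space, so $C^\infty(X,E)[\lambda]\subseteq\ker\iCR^{\,j_0}=\sN^{\leq j_0-1}(X,E)\subseteq\sN(X,E)$; summing over $\lambda$ gives $C^\infty(X,E)_{U\text{-finite}}\subseteq\sN(X,E)$. Density of $\sN(X,E)$ in $C^\infty(X,E)$ is then immediate from Peter--Weyl.

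The main obstacle, to my mind, is the finiteness input in the previous paragraph — that for each fixed $\lambda$ the near-holomorphy degree of the $V_\lambda$-isotypic component of $C^\infty(X,E)$ is bounded. I have reduced it to $\Sym^\bullet(\frakn^-_{\CC})^L=\CC$ together with Hilbert finiteness, but that reduction presupposes a careful setup of the operators $\iCR^{\,j}$, of the identification of the graded pieces of the filtration with holomorphic sections of $\Sym^jTX\otimes E$, and of the compatibility of everything with the $U$-action; this is where I expect the technical work to lie, the remaining steps being essentially formal.
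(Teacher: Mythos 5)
Your proposal is correct, but the key step (the inclusion $C^\infty(X,E)_{U\text{-finite}}\subseteq\sN(X,E)$) is proved by a genuinely different mechanism than in the paper. The paper first derives an explicit formula for $\iCR^m$ in the compact picture (Proposition~\ref{prop:iCRoperatorOnFlagMfds}): $\iCR^m f$ is a sum of iterated right derivatives $\sR_{\alpha_1}\cdots\sR_{\alpha_m}f$ along root vectors in $\frakn^+$, with explicit coefficients. Given a $U$-finite $f$ spanning a finite-dimensional $U$-module $V$, the complexified $\frakg$-action makes $V$ a finite-dimensional $\frakg$-module on which $\frakn^+$ acts nilpotently, so all such $m$-fold derivatives vanish at $u=e$ for $m\gg0$; equivariance then propagates the vanishing of $\iCR^m f$ over all of $U$. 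Your argument bypasses the explicit formula entirely: you use only the $U$-equivariance of $\iCR$, the identification of $E\otimes\Sym^m$ as the homogeneous bundle with fibre $\Sym^m(\frakg/\frakp)\otimes E_o$, and Frobenius reciprocity, reducing everything to the vanishing of $\operatorname{Hom}_K(V_\lambda,\Sym^m(\frakg/\frakp)\otimes E_o)$ for $m\gg0$ — which follows from your observation that $\Sym^\bullet(\frakg/\frakp)^{L}=\CC$ (or, even more directly, by evaluating weights on the element $C_o\in\frakz(\frakl)^+$ defining the metric: the weights of $\Sym^m(\frakg/\frakp)$ tend to $-\infty$ on $C_o$ while those of $V_\lambda\otimes E_o^*$ are bounded). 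Both proofs ultimately rest on the same weight-positivity of $\Xi_+$, but yours is softer and shorter, at the cost of not producing the explicit operator formula, which the paper also needs elsewhere (e.g.\ for the remark that $\iCR^mf$ is automatically symmetric, and for the link to the Brylinski--Kostant filtration). Your treatment of the forward inclusion (induction on the degree, embedding $\sN^m/\sN^{m-1}$ into $\sO(X,E\otimes\Sym^m)$) is a more elementary substitute for the paper's appeal to coherence and Cartan--Serre in Corollary~\ref{cor:finitedimensionality}, and is the global-sections shadow of Theorem~\ref{thm:exactsequence}. The "main obstacle" you flag is not actually an obstacle: for the reverse inclusion you never need the surjectivity statement of Theorem~\ref{thm:exactsequence}, only that $\iCR^m$ is a $U$-equivariant map into $C^\infty(X,E\otimes\Sym^m)$.
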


There is a notion of the degree of a nearly holomorphic section, which provides a filtration of $\sN(X,E)$, and the space $\sN^m(X,E)$ of nearly holomorphic sections of degree $\leq m$ is $U$-invariant and finite dimensional. The second goal of this article is to determine the corresponding Hilbert series,
\begin{align}\label{eq:HilbertSeries}
	\sH(\sN^\bullet_E,q):=\sum_{m\geq 0}\ch(\sN^m(X,E)/\sN^{m-1}(X,E))\,q^m,
\end{align}
where $\ch(M)$ denotes the formal character of a $U$-module $M$. As a first step, we show (in the setting of a general Kähler manifold) that the sheaf $\sN^m(E)$ associated to nearly holomorphic sections of degree $\leq m$ is a finitely generated locally free analytic sheaf and gives rise to an exact sequence of sheaves (see Theorem~\ref{thm:exactsequence}),
\begin{align}\label{eq:exactsequence}
	0\longrightarrow 
	\sN^{m-1}(E)\longrightarrow
	\sN^m(E)\longrightarrow
	\sO(E\otimes\Sym^m)\longrightarrow 0,
\end{align}
where $\Sym^m$ denotes the $m$'th symmetric power of the holomorphic tangent bundle. Using sheaf cohomology, this determines the Hilbert series up to higher cohomology terms. Applying cohomological vanishing results due to Broer \cite{Br93}, this yields the Hilbert series in the case of the full flag variety $X=G/B$, where $P=B$ is a Borel subgroup.

\begin{theoremAlpha}\label{thm:B}
	Let $X=G/B$ and $L_\mu=G\times^B \CC_\mu$ be the $G$-homogeneous line bundle associated to 
	$\mu\in\Lambda$. If $\mu$ is dominant, then
	\begin{align}\label{eq:HilbertSeriesFormula}
		\sH(\sN^\bullet_{L_\mu^*},q) = \sum_{\lambda\in\Lambda^+} m_\lambda^\mu(q)\,\ch V_\lambda^*,
	\end{align}
	where $m_\lambda^\mu(q)$ is Lusztig's $q$-analog of Kostant's weight multiplicity formula.
\end{theoremAlpha}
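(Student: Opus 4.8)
The plan is to combine the exact sequence of sheaves \eqref{eq:exactsequence} with the Borel--Weil--Bott theorem and Broer's cohomological vanishing results.

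First, I would take the long exact cohomology sequence associated to \eqref{eq:exactsequence} for each $m$, namely
\begin{align*}
	\cdots\to H^i(X,\sN^{m-1}(L_\mu^*))\to H^i(X,\sN^m(L_\mu^*))\to H^i(X,\sO(L_\mu^*\otimes\Sym^m))\to H^{i+1}(X,\sN^{m-1}(L_\mu^*))\to\cdots
\end{align*}
All sheaves appearing are $G$-equivariant, so these are sequences of $U$-modules, and on $X=G/B$ the bundle $\Sym^m$ is the $m$'th symmetric power of $\frakg/\frakb$, i.e.\ of the bundle associated to the nilradical $\bar\frakn$ (or its dual, depending on conventions). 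The key input is that since $\mu$ is dominant, the twisted symmetric powers $L_\mu^*\otimes\Sym^m$ have vanishing higher cohomology: this is precisely the content of Broer's theorem \cite{Br93}, which asserts that $H^i(G/B,\sO(\Sym^\bullet(\frakg/\frakb)\otimes L_{-\mu}))=0$ for $i>0$ when $\mu$ is dominant (Broer's normalization concerns the cotangent bundle $T^*(G/B)$ and line bundles pulled back from $G/P$, but the dominant line-bundle case is exactly what is needed). Feeding this vanishing into the long exact sequence and inducting on $m$ shows that $H^i(X,\sN^m(L_\mu^*))=0$ for all $i>0$ and all $m$, and that
\begin{align*}
	0\to H^0(X,\sN^{m-1}(L_\mu^*))\to H^0(X,\sN^m(L_\mu^*))\to H^0(X,\sO(L_\mu^*\otimes\Sym^m))\to 0
\end{align*}
is exact. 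By Theorem~\ref{thm:A} the space $H^0(X,\sN^m(L_\mu^*))=\sN^m(X,L_\mu^*)$, so the graded pieces of the filtration are $\sN^m/\sN^{m-1}\cong H^0(X,\sO(L_\mu^*\otimes\Sym^m))$ as $U$-modules, and therefore
\begin{align*}
	\sH(\sN^\bullet_{L_\mu^*},q)=\sum_{m\ge 0}\ch H^0(X,\sO(L_\mu^*\otimes\Sym^m))\,q^m.
\end{align*}

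It remains to identify the right-hand side with $\sum_{\lambda\in\Lambda^+}m_\lambda^\mu(q)\,\ch V_\lambda^*$. For this I would expand $H^0(G/B,\sO(L_\mu^*\otimes\Sym^m))$ using Borel--Weil: decomposing $\Sym^m(\frakg/\frakb)$ into $B$-stable pieces and applying Bott's theorem term by term, the multiplicity of $V_\lambda^*$ in $H^0(X,\sO(L_\mu^*\otimes\Sym^m))$ is a weight-multiplicity count in symmetric powers of $\bar\frakn$ weighted by the grading $m$. Concretely, $\bigoplus_m \Sym^m(\frakg/\frakb)$ has formal character $\prod_{\alpha>0}(1-q\,e^{-\alpha})^{-1}$ (the $q$ tracking the symmetric degree), so that the generating function $\sum_m\ch H^0(X,\sO(L_\mu^*\otimes\Sym^m))q^m$, after applying the Weyl character formula / Bott algorithm, reproduces exactly the Lusztig $q$-analog
\begin{align*}
	m_\lambda^\mu(q)=\sum_{w\in W}(-1)^{\ell(w)}\,\mathcal{P}_q\bigl(w(\lambda+\rho)-(\mu+\rho)\bigr),
\end{align*}
where $\mathcal{P}_q$ is the $q$-analog of Kostant's partition function defined by $\prod_{\alpha>0}(1-q\,e^{-\alpha})^{-1}=\sum_\nu \mathcal{P}_q(\nu)e^{-\nu}$. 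Tracking dualizations and sign conventions carefully, the alternating sum over $W$ coming from Bott's theorem collapses precisely to this formula.

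The main obstacle is the cohomological vanishing step: one must check that Broer's results apply in exactly the generality needed here, i.e.\ for the twist by an arbitrary dominant line bundle $L_\mu$ (not merely the trivial bundle or bundles pulled back from a partial flag variety), and that the vanishing is compatible with the $U$-module (equivariant) structure so that the long exact sequence splits as sequences of $U$-modules. A secondary but still delicate point is bookkeeping: matching the author's normalization of ``degree of a nearly holomorphic section,'' the identification of $\Sym^m$ in \eqref{eq:exactsequence} with symmetric powers of $\frakg/\frakb$ versus its dual, and the passage between $L_\mu$ and $L_\mu^*$, so that the $q$-grading on the Hilbert series lines up with the $q$ in $\mathcal{P}_q$ and the final answer is stated in terms of $V_\lambda^*$ rather than $V_\lambda$. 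Once these conventions are pinned down, the computation of the character generating function is a formal manipulation of the Weyl character formula.
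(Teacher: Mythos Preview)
Your proposal is correct and follows essentially the same route as the paper: the paper proves the general parabolic version (Theorem~\ref{thm:HilbertSeries}) by combining Corollary~\ref{cor:exactsequence} (the long exact sequence plus induction on $m$), Broer's vanishing \cite{Br93,Br94} for dominant $\mu$, and the Euler-character computation of Proposition~\ref{prop:EulerCharacter} via the $B$-stable filtration of $\Sym^m(\frakg/\frakb)$ and Borel--Weil--Bott, which is exactly your outline. Two minor remarks: the identification $H^0(X,\sN^m(L_\mu^*))=\sN^m(X,L_\mu^*)$ is a definition, not a consequence of Theorem~\ref{thm:A}; and the Weyl-group alternating sum in $m_\lambda^\mu(q)$ arises because the individual line-bundle subquotients of $L_\mu^*\otimes\Sym^m$ need not have vanishing higher cohomology (only the full bundle does, by Broer), so the character identity is first established at the level of Euler characteristics and only then collapsed to $H^0$.
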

Here $L_\mu^*=L_{-\mu}$ denotes the dual line bundle, $\Lambda$ is the weight lattice associated to a torus $T\subseteq B$, $\Lambda^+\subseteq\Lambda$ is the set of dominant weights, and $V_\lambda^*$ denotes the dual of the $U$-module with highest weight $\lambda$. Lusztig's polynomials $m_\lambda^\mu(q)$ are defined \cite{Lu83} in purely combinatorial terms by
\begin{align}\label{eq:Lusztigqpoly}
	m_\lambda^\mu(q) := \sum_{w\in W}\sgn(w)\,\wp_q(w(\lambda+\rho)-\mu-\rho),
\end{align}
where $W$ denotes the Weyl group, $\rho$ is the half sum of positive roots, and $\wp_q$ is the $q$-analog of Kostant's partition function, i.e., the coefficient of $q^m$ in $\wp_q(\nu)$ is the number of ways to write $\nu\in\Lambda^+$ as the sum of precisely $m$ positive roots. These polynomials occur in various branches of representation theory. In the special case $\mu=0$ (corresponding to the trivial line bundle), the polynomials $m_\lambda^0(q)$ were first constructed, independently, by Hesselink \cite{He80} and Peterson \cite{Pe78}. They discovered that the polynomials $m_\lambda^0(q)$ are the coefficients of the Hilbert series corresponding to the (graded) coordinate ring of the nilpotent cone in the Lie algebra $\frakg$ of $G$. Prior to this, Kostant \cite{Kos63} determined this Hilbert series in terms of generalized exponents by an investigation of $G$-harmonic polynomials on $\frakg$. For general $\mu$, Lusztig and Kato proved that the $m_\lambda^\mu(q)$ are closely related to certain Kaszdan--Lusztig polynomials and hence encode deep combinatorial and geometric information, see \cite{Gu87}. We also like to mention that there is a close connection between the filtration of nearly holomorphic sections and the Brylinski--Kostant filtration of $U$-modules \cite{Gu87, HJ09,JLZ00}, which will be the topic of a subsequent article.

For a general flag manifold $X=G/P$ and a $G$-homogeneous vector bundle $E=G\times^P E_o$, we show that \eqref{eq:HilbertSeriesFormula} still holds true under the additional assumption that $E$ satisfies the cohomological vanishing condition 
\begin{align}\tag{V}
	H^k(X,\sO(E^*\otimes\Sym^m))=0\quad\text{for all}\quad k>0,\ m\geq 0,
\end{align}
where $\Sym^m$ denotes the $m$'th symmetric power of the holomorphic tangent bundle, and when Lusztig's polynomials are replaced by their parabolic versions, see Theorem~\ref{thm:HilbertSeries}. The vanishing condition is essentially a condition imposed on the highest weight $\mu$ of the simple $P$-module $E_o$. The list of known results given in Section~\ref{sec:vanishingcohomology} leads to the conjecture that \eqref{eq:vanishingcondition} holds for any dominant highest weight $\mu$. In the case of Hermitian symmetric spaces we confirm this by applying the results of \cite{Sc13a} which are based on a detailed analysis of nearly holomorphic sections in this setting.\\

Even though nearly holomorphic sections (and the corresponding higher order Cauchy--Riemann operators, see Section~\ref{subsec:CauchyRiemannOperators}) are naturally defined on general Kähler manifolds, they have been studied in detail only in quite restricted cases: for line bundles on bounded symmetric domains see \cite{EP96,PPZ90,S86,S87,S00,Zh00}, the case of the Riemann sphere is discussed in \cite{PZ93}, and general compact Hermitian symmetric spaces are investigated in \cite{Sc13b,Sc13a,S87}. Some general results concerning higher order Laplacian operators associated to Cauchy--Riemann operators of the trivial line bundle are obtained in \cite{EP96}. For further applications of nearly holomorphic sections and Cauchy--Riemann operators in representation theory, see \cite{KZ12,S90,Zh01,Zh02}.\\

We briefly outline the content of this article. Section~\ref{sec:NHS} is concerned with the theory of nearly holomorphic sections on general Kähler manifolds. We follow Engli\v{s} and Peetre \cite{EP96} and introduce nearly holomorphic sections as elements of the kernel of some (higher order) Cauchy--Riemann operator. In this way, it is clear that we obtain a sheaf of nearly holomorphic sections, and the main goal is to show exactness of the sequence \eqref{eq:exactsequence} in Theorem~\ref{thm:exactsequence}. From Section~\ref{sec:CRonFlagManifolds} onwards, we consider generalized flag manifolds $X=G/P$. We briefly recall the classification of $U$-invariant Kähler structures on $X$ and give an explicit description of the higher order Cauchy--Riemann operators. This is the key in proving Theorem~\ref{thm:A}. In Section~\ref{sec:HilbertSeries}, we use equivariant cohomology to prove the generalized version of Theorem~\ref{thm:B} mentioned above, see Theorem~\ref{thm:HilbertSeries}, and discuss the cohomological vanishing condition.\\



\section{Nearly holomorphic sections}\label{sec:NHS}
\subsection{Cauchy--Riemann operators}\label{subsec:CauchyRiemannOperators}
Let $(X,h)$ be a Kähler manifold with Kähler metric $h$, and let $E\to X$ be a holomorphic vector bundle. Let $T^{1,0}$ denote the holomorphic and $T^{0,1}$ denote the antiholomorphic tangent bundle on $X$. The metric induces a vector bundle isomorphism $(T^{0,1})^*\cong T^{1,0}$, which also induces an isomorphism between the spaces of smooth sections of $(T^{0,1})^*$ and of $T^{1,0}$. The composition of the $\delbar$-operator of $E$ and this isomorphism defines the \emph{Cauchy--Riemann operator} $\iCR$, 
\begin{align}
  \begin{tikzpicture}[baseline=-6mm]
    \node (C1) at (0,0) {$C^\infty(X,E)$};
    \node[right=of C1] (C2) {$C^\infty(X,E\otimes (T^{0,1})^*)$};
    \node[right=of C2] (C3) {$C^\infty(X,E\otimes T^{1,0})$.};
    \draw[->] (C1)--(C2) node [midway,above] {\small$\overline\partial$};
    \draw[->] (C2)--(C3) node [midway,above] {\small$h_*$};
		\draw[->,bend right=20] (C1) to node [below] {$\iCR$} (C3);
	\end{tikzpicture}
\end{align}
Since $E\otimes T^{1,0}$ is again a holomorphic vector bundle, iterates of the Cauchy--Riemann operator are defined in the obvious way. By abuse of notation, we simply write
\[
	\iCR^m :=\iCR\circ\cdots\circ\iCR:C^\infty(X,E)\to
	C^\infty(X,E\otimes (T^{1,0})^{\otimes m}).
\]
Since $h$ is a Kähler metric, and hence satisfies some symmetry properties, it turns out \cite[Lemma~2.0]{S86} that $\iCR^mf$ is actually a section in $E\otimes\Sym^m$, where $\Sym^m\subseteq (T^{1,0})^{\otimes m}$ denotes the holomorphic subbundle of symmetric tensors, so
\[
	\iCR^m\colon C^\infty(X,E)\to C^\infty(X,E\otimes\Sym^m).
\]
The kernels of these higher order Cauchy--Riemann operators define the filtered vector space of \emph{nearly holomorphic sections},
\[
	\sN^m(X,E):=\ker\iCR^{m+1},\quad
	\sN(X,E):=\bigcup_{m\geq 0}\sN^m(X,E).
\]
For $f\in \sN(X,E)$, the minimal $m$ such that $f\in\sN^m(X,E)$ is called the \emph{degree} of $f$.

\subsection{Sheaf of nearly holomorphic sections}
For a holomorphic vector bundle $E\to X$, let $\sO(E)$ and $C^\infty(E)$ denote the sheaves of  holomorphic and smooth sections in $E$, i.e., for open $U\subseteq X$,
\[
	\sO(E)(U) :=\sO(U,E),\quad C^\infty(E)(U) :=C^\infty(U,E).
\]
We may consider both sheaves as analytic sheaves, i.e., sheaves of $\sO_X$-modules, where $\sO_X$ is the structure sheaf of the complex manifold $X$. Then, $\iCR^{m+1}$ induces a homomorphism of analytic sheaves,
\[
	\iCR^{m+1}\colon C^\infty(E)\to C^\infty(E\otimes\Sym^{m+1}).
\]
Hence, the kernel is an analytic sheaf, which we call the \emph{sheaf of nearly holomorphic sections of degree $\leq m$} and denote by $\sN^m(E):=\ker\iCR^{m+1}$. We note that $\sN^0(E) = \sO(E)$, and for $m\geq 1$ there is a natural inclusion
\[
	\iota\colon\sN^{m-1}(E)\to\sN^m(E).
\]

\begin{theorem}\label{thm:exactsequence}
	For all $m\geq1$,
	\[
		0\longrightarrow 
		\sN^{m-1}(E)\stackrel{\iota}{\longrightarrow}
		\sN^m(E)\stackrel{\iCR^m}{\longrightarrow}
		\sO(E\otimes\Sym^m)\longrightarrow 0
	\]
	is a short exact sequence of locally free analytic sheaves on $X$. 
\end{theorem}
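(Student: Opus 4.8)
The plan is to verify exactness termwise and check local freeness, working in a holomorphic trivialization. First I would establish that $\iCR^m\colon \sN^m(E)\to\sO(E\otimes\Sym^m)$ genuinely lands in the holomorphic sections: if $f\in\sN^m(E)(U)=\ker\iCR^{m+1}$, then $\iCR(\iCR^m f)=\iCR^{m+1}f=0$, and since $\iCR$ is the Cauchy--Riemann operator of the holomorphic bundle $E\otimes\Sym^m$, its kernel on sections is exactly $\sO(E\otimes\Sym^m)$; hence $\iCR^m f$ is holomorphic. Exactness at $\sN^{m-1}(E)$ is immediate since $\iota$ is the inclusion of a subsheaf. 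Exactness at $\sN^m(E)$ amounts to the identity $\ker(\iCR^m|_{\sN^m(E)}) = \sN^{m-1}(E)$: if $\iCR^m f=0$ then a fortiori $\iCR^{m+1}f=0$, and conversely; so the only content is that $\ker\iCR^m\subseteq\ker\iCR^{m+1}$, which is clear, and that $\iCR^m f=0$ is the defining condition of $\sN^{m-1}(E)$ — which it is.

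The substantive point is surjectivity of $\iCR^m\colon\sN^m(E)\to\sO(E\otimes\Sym^m)$ as a map of sheaves, i.e. local surjectivity: given a holomorphic section $g$ of $E\otimes\Sym^m$ near a point, I must produce a smooth section $f$ of $E$ near that point with $\iCR^{m+1}f=0$ and $\iCR^m f=g$. I would do this in local holomorphic coordinates $z=(z^1,\dots,z^n)$ with $E$ trivialized, so that $\iCR$ is (up to the metric identification $h_*$, which in K\"ahler coordinates is the honest metric tensor $h^{i\bar j}$) built from the antiholomorphic derivatives $\partial/\partial\bar z^j$. The idea is to look for $f$ as a polynomial of degree $m$ in the $\bar z^j$ with holomorphic coefficients: write $f = \sum_{|\beta|\le m} a_\beta(z)\,\bar z^\beta$. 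Applying $\iCR$ once lowers the $\bar z$-degree by one and contracts with the metric; applying it $m+1$ times annihilates any such $f$ (the top antiholomorphic degree is $m$), so automatically $f\in\sN^m(E)$. One then checks that $\iCR^m f$ equals $m!$ times the top symmetric part of the coefficients $a_\beta$ with $|\beta|=m$, suitably transported by the metric, evaluated at the relevant point; inverting this linear relation (the metric is invertible, and the combinatorial factor is nonzero) lets us solve for the $a_\beta$ with $|\beta|=m$ in terms of $g$, while the lower-order $a_\beta$ are free. This gives a local holomorphic — in fact polynomial in $\bar z$ — preimage, proving surjectivity of the sheaf map.

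Finally, local freeness: $\sN^{m-1}(E)$ and $\sO(E\otimes\Sym^m)$ are locally free ($\sN^{m-1}$ by downward induction starting from $\sN^0(E)=\sO(E)$, using that an extension of locally free sheaves by locally free sheaves is locally free once we know the sequence is exact — so this should be organized as an induction on $m$), and then $\sN^m(E)$ sits in the short exact sequence between two locally free sheaves, hence is itself locally free of rank $\operatorname{rk}E\cdot(\binom{n+m}{m})$; concretely the local frame is given by the monomials $\bar z^\beta$, $|\beta|\le m$, times a holomorphic frame of $E$, which the coordinate computation above exhibits directly. I expect the main obstacle to be purely bookkeeping: getting the precise combinatorial and metric factors in the identity "$\iCR^m(\sum a_\beta \bar z^\beta) = (\text{const})\cdot h$-contraction of the top coefficients" right, and making sure the symmetric-tensor constraint (that $\iCR^m$ lands in $\Sym^m$, not all of $(T^{1,0})^{\otimes m}$) is compatible with the count — but this is exactly the content of the cited \cite[Lemma~2.0]{S86}, so no genuinely new analytic input is needed beyond the K\"ahler condition already invoked.
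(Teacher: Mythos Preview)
Your overall architecture is right, and the easy parts (well-definedness of the map into $\sO(E\otimes\Sym^m)$, injectivity of $\iota$, exactness in the middle) match the paper exactly. The gap is in your local surjectivity argument and, correspondingly, in your description of a local frame.

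You propose to take $f=\sum_{|\beta|\le m}a_\beta(z)\,\bar z^\beta$ and assert that ``applying $\iCR$ once lowers the $\bar z$-degree by one,'' hence $\iCR^{m+1}f=0$. This is false unless the metric is flat. Recall that $\iCR=h_*\circ\delbar$: while $\delbar$ does lower the $\bar z$-degree of such an $f$, the isomorphism $h_*$ then multiplies by the inverse metric $h^{i\bar j}(z,\bar z)$, which is a genuinely smooth (not holomorphic, not polynomial in $\bar z$) function. So $\iCR f$ is not a polynomial in $\bar z$ with holomorphic coefficients, and iterating need not eventually kill it. Already for $m=1$ and $f=a(z)\,\bar z^j$ one gets $\iCR f=\sum_i a(z)\,h^{i\bar j}\,\partial_i$, which is typically not holomorphic, so $\iCR^2 f\neq 0$. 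Consequently your candidate preimage is not in $\sN^m(E)$, and the same objection breaks your proposed local frame $\{\bar z^\beta\}$ for $\sN^m(E)$.

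The fix, and this is exactly what the paper does, is to replace $\bar z^j$ by the functions $Q_\ell:=\partial\Psi/\partial z^\ell$ coming from a local K\"ahler potential $\Psi$. These are engineered so that $\delbar Q_\ell=\sum_j h_{\ell\bar j}\,d\bar z^j$, and after applying $h_*$ the metric cancels: $\iCR Q_\ell=\partial_\ell$, a holomorphic section of $T^{1,0}$. Hence $\iCR$ genuinely lowers the $Q$-degree by one on polynomials in the $Q_\ell$ with holomorphic coefficients, and one computes $\iCR^m\bigl(\sum_{|I|=m}g_I\,Q^I\bigr)=\sum_{|I|=m}g_I\otimes\partial^I$. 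This simultaneously gives the preimage of a prescribed $g\in\sO(U,E\otimes\Sym^m)$ and exhibits $\{Q^I:|I|\le m\}$ as a free $\sO_X$-basis of $\sN^m(E)$ locally, so local freeness comes out directly rather than by induction on extensions. The K\"ahler condition enters precisely here: it is what guarantees the existence of a potential $\Psi$ with $h_{i\bar j}=\partial_{\bar z^j}\partial_{z^i}\Psi$.
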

\begin{proof}
We first note that the sequence is well-defined, since for any open $U\subseteq X$, by definition $\iCR^m$ maps $\sN^m(U,E)$ into $C^\infty(U,E\otimes\Sym^m)$, and the image gets annihilated by $\iCR$, i.e., consists of holomorphic sections. For exactness, it is clear that $\iota$ is injective, and by definition $\sN^{m-1}(E)$ is the kernel of $\iCR^m$. It remains to show that $\iCR^m$ is a surjective homomorphism of sheaves. We show that each $x\in X$ has a neighborhood $U\subseteq X$ for which the equation $\iCR^mf=g$ is solvable for all $g\in\sO(U,E\otimes\Sym^m)$.

Recall the following local characterization of nearly holomorphic sections \cite{Sc13a}: Since $X$ is Kähler, the metric $h$ is locally given by a Kähler potential, so we can choose $U\subseteq X$ with local coordinates $(z^1,\ldots,z^n)$ and a smooth map $\Psi:U\to\RR$ such that the metric coefficients are given by 
\[
	h_{i\bar j} = \frac{\partial^2\Psi}{\partial\bar z^j\partial z^i}.
\]
For $1\leq\ell\leq n$, define the smooth map $Q_\ell:U\to\CC$ by
\[
	Q_\ell(z):=\frac{\partial\Psi}{\partial z^\ell}(z),\quad z\in U.
\]
Then \cite[Proposition~1.5]{Sc13a}, nearly holomorphic sections are characterized as polynomials in the functions $Q_\ell$ with holomorphic coefficients, i.e., $f\in C^\infty(U,E)$ is nearly holomorphic of degree $\leq m$ if and only if 
\[
	f(z) = \sum_{|I|\leq m} f_I(z)\,Q(z)^I\quad\text{with}\quad
	f_I\in\sO(U,E),\ z\in U.
\]
Here, we use standard multi-index notation, so for $I=(i_1,\ldots, i_n)\in\NN_0^n$ we set $|I| := \sum_{\ell=1}^n i_\ell$, and
\[
	Q(z)^I := \prod_{\ell=1}^n Q_\ell(z)^{i_\ell}.
\]
Moreover, the coefficients $f_I\in\sO(U,E)$ are uniquely determined by the choice of the Kähler potential. This shows that $\sN^m(U,E)$ is a free $\sO_X|_U$-module of finite rank, hence $\sN^m(E)$ is a locally free analytic sheaf. Now consider a holomorphic section $g\in\sO(U,E\otimes\Sym^m)$. Let $\partial_1,\ldots,\partial_n$ be the local frame for the holomorphic tangent bundle $T^{1,0}$ induced by the coordinate functions. For a multi-index $I\in\NN_0^n$ with $|I|=m$ set
\[
	\partial^I := \sum_{\sigma\in\SymGp_m}\sigma(
	\underbrace{\partial_1\otimes\cdots\otimes\partial_1}_{i_1\text{ times}}\otimes\cdots\otimes
	\underbrace{\partial_n\otimes\cdots\otimes\partial_n}_{i_n\text{ times}}),
\]
where $\SymGp_m$ denotes the symmetric group on $\{1,\ldots, m\}$ and 
\[
	\sigma(\partial_{j_1}\otimes\cdots\otimes\partial_{j_m})
	:= \partial_{j_{\sigma(1)}}\otimes\cdots\otimes\partial_{j_{\sigma(m)}}.
\]
Then $\partial^I$ is a section of $\Sym^m$ on $U$, and the set of all $\partial^I$ with $|I|=m$ forms a local frame for $\Sym^m$. Consider the decomposition 
\begin{align*}
	g = \sum_{|I|=m} g_I\otimes\partial^I
\end{align*}
with holomorphic $g_I\in\sO(U,E)$ and define
\[
	f(z):=\sum_{|I|=m} g_I(z)\cdot Q(z)^I.
\]
It is straightforward to check that 
\[
	\iCR(Q^I) = \sum_{k=1}^n i_k\,Q^{I-e_k}\,\partial_k,\quad
	\partial^J = \sum_{k=1}^n j_k\,\partial^{J-e_k}\otimes\partial_k,
\]
where $e_k$ denotes the $k$'th standard unit vector in $\RR^m$, so $I-e_k=(i_1,\ldots,i_k-1,\ldots,i_n)$ and likewise $J-e_k=(j_1,\ldots,j_k-1,\ldots,j_n)$.
By induction on $0\leq r\leq m$, it readily follows that
\[
	\iCR^rf(z) = \sum_{|I|= m-r,\,|J|=r}
					\tfrac{(I+J)!}{I!J!}\,g_{I+J}(z)\cdot Q(z)^I\otimes\partial^J(z)
\]
where $I!:=i_1!\cdots i_n!$. In particular, $\iCR^mf = g$.
\end{proof}

For any sheaf $\sF$ on $X$, let $H^k(X,\sF)$ denote its $k$'th cohomology. By Cartan--Serre theorem, coherence of $\sN^m(E)$ implies finiteness: 

\begin{corollary}\label{cor:finitedimensionality}
	If $X$ is compact, then $H^k(X,\sN^m(E))$ is finite dimensional for all $k,m\geq 0$.
\end{corollary}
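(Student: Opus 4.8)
The plan is to deduce the statement directly from Theorem~\ref{thm:exactsequence} together with the Cartan--Serre finiteness theorem. By Theorem~\ref{thm:exactsequence}, the sheaf $\sN^m(E)$ is a locally free analytic sheaf of finite rank on $X$ (a local frame being given by the sections $Q^I$ with $|I|\leq m$ and holomorphic coefficients, as exhibited in the proof), and a locally free sheaf of finite rank is in particular coherent. The Cartan--Serre theorem then asserts that for any coherent analytic sheaf $\sF$ on a compact complex manifold $X$ all cohomology groups $H^k(X,\sF)$ are finite-dimensional complex vector spaces; applying this to $\sF=\sN^m(E)$ yields the claim for every $k,m\geq 0$.

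Alternatively, one can argue by induction on $m$ invoking only the classical finiteness statement for holomorphic vector bundles. For $m=0$ one has $\sN^0(E)=\sO(E)$, whose cohomology is finite-dimensional. For the inductive step, the short exact sequence of Theorem~\ref{thm:exactsequence} induces a long exact sequence in sheaf cohomology containing the segment
\[
	H^k(X,\sN^{m-1}(E))\longrightarrow H^k(X,\sN^m(E))\longrightarrow H^k(X,\sO(E\otimes\Sym^m)).
\]
The left-hand term is finite-dimensional by the induction hypothesis, and the right-hand term is finite-dimensional by the finiteness theorem applied to the holomorphic vector bundle $E\otimes\Sym^m$ on the compact manifold $X$. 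Hence the middle term, being an extension of a subspace of the right-hand term by a quotient of the left-hand term, is finite-dimensional as well.

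The argument is essentially immediate, and the only point that genuinely requires verification is the coherence of $\sN^m(E)$ -- equivalently, its local freeness of finite rank -- which has already been established in the proof of Theorem~\ref{thm:exactsequence}. I therefore do not expect any substantial obstacle here.
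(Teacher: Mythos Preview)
Your first argument is exactly the paper's own: Theorem~\ref{thm:exactsequence} gives local freeness (hence coherence) of $\sN^m(E)$, and then Cartan--Serre finiteness yields the result. The inductive alternative via the long exact sequence is also correct but not needed.
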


In particular, since $\sN^m(X,E)=H^0(X,\sN^m(E))$, the space of global nearly holomorphic sections of degree $\leq m$ is finite dimensional if $X$ is compact, see \cite[Proposition~1.10]{Sc13a} for a proof of this fact using elliptic operator theory. We also note the following consequence of Theorem~\ref{thm:exactsequence}.

\begin{corollary}\label{cor:exactsequence}
	Let $m\geq 1$ and assume $H^1(X,\sO(E\otimes\Sym^\ell))=0$ for all $0\leq\ell<m$. Then,
	\[
		0\longrightarrow 
		\sN^{m-1}(X,E)\stackrel{\iota}{\longrightarrow}
		\sN^m(X,E)\stackrel{\iCR^m}{\longrightarrow}
		\sO(X,E\otimes\Sym^m)\longrightarrow 0
	\]
	is an exact sequence.
\end{corollary}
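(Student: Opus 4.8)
The plan is to pass from the short exact sequence of sheaves in Theorem~\ref{thm:exactsequence} to the associated long exact sequence in sheaf cohomology and then peel off the global section functors inductively. Concretely, for each $\ell$ with $1\le\ell\le m$, Theorem~\ref{thm:exactsequence} gives the short exact sequence
\[
	0\longrightarrow \sN^{\ell-1}(E)\longrightarrow \sN^{\ell}(E)\longrightarrow \sO(E\otimes\Sym^{\ell})\longrightarrow 0,
\]
and applying the derived functors of global sections yields the long exact sequence
\[
	0\to H^0(X,\sN^{\ell-1}(E))\to H^0(X,\sN^{\ell}(E))\to H^0(X,\sO(E\otimes\Sym^{\ell}))\to H^1(X,\sN^{\ell-1}(E))\to\cdots.
\]
Since $H^0$ of a sheaf is just its space of global sections, the first three terms are exactly $\sN^{\ell-1}(X,E)$, $\sN^{\ell}(X,E)$ and $\sO(X,E\otimes\Sym^{\ell})$, with the maps $\iota$ and $\iCR^{\ell}$. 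So exactness of the desired three-term sequence for a given $\ell$ holds as soon as the connecting map $H^0(X,\sO(E\otimes\Sym^{\ell}))\to H^1(X,\sN^{\ell-1}(E))$ vanishes, and for that it suffices that $H^1(X,\sN^{\ell-1}(E))=0$.

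First I would establish, by induction on $\ell$, the stronger statement that $H^1(X,\sN^{\ell}(E))=0$ for all $0\le \ell< m$; the corollary for the index $m$ then follows from the case $\ell=m-1$ of the long exact sequence above. The base case $\ell=0$ is $H^1(X,\sN^0(E))=H^1(X,\sO(E))=H^1(X,\sO(E\otimes\Sym^0))=0$, which is part of the hypothesis. For the inductive step, suppose $0<\ell<m$ and $H^1(X,\sN^{\ell-1}(E))=0$. From the long exact sequence attached to the $\ell$-th short exact sequence of sheaves, the relevant segment reads
\[
	H^1(X,\sN^{\ell-1}(E))\longrightarrow H^1(X,\sN^{\ell}(E))\longrightarrow H^1(X,\sO(E\otimes\Sym^{\ell})).
\]
The left term vanishes by the inductive hypothesis, and the right term vanishes by hypothesis since $\ell<m$. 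Hence $H^1(X,\sN^{\ell}(E))=0$, completing the induction.

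With $H^1(X,\sN^{m-1}(E))=0$ in hand, the long exact sequence for $\ell=m$ truncates to
\[
	0\longrightarrow \sN^{m-1}(X,E)\stackrel{\iota}{\longrightarrow}\sN^{m}(X,E)\stackrel{\iCR^{m}}{\longrightarrow}\sO(X,E\otimes\Sym^{m})\longrightarrow H^1(X,\sN^{m-1}(E))=0,
\]
which is precisely the asserted exact sequence. I do not expect any genuine obstacle here: the argument is a formal consequence of Theorem~\ref{thm:exactsequence} together with the long exact cohomology sequence, the only mild point being the bookkeeping of which vanishing hypotheses are needed at which stage, which the induction on $\ell$ handles cleanly. (One should note that surjectivity of $\iCR^m$ on global sections is \emph{not} automatic from surjectivity of the sheaf map — this is exactly where the hypothesis on $H^1$ of the lower-degree symmetric power bundles enters.)
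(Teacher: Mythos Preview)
Your proposal is correct and follows essentially the same approach as the paper: pass to the long exact cohomology sequence from Theorem~\ref{thm:exactsequence}, reduce to showing $H^1(X,\sN^{m-1}(E))=0$, and prove $H^1(X,\sN^{\ell}(E))=0$ for all $0\le\ell<m$ by induction on $\ell$ using the vanishing hypothesis. The paper's proof is the same argument, just slightly more compressed.
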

\begin{proof}
The short exact sequence in Theorem~\ref{thm:exactsequence} yields the long exact cohomology sequence
\begin{align*}
	0\to\,&\sN^{m-1}(X,E)\to\sN^m(X,E)\to\sO(X,E\otimes\Sym^m)\to\\
	&H^1(X,\sN^{m-1}(E))\to H^1(X,\sN^m(E))\to H^1(X,\sO(E\otimes\Sym^m))\to\cdots
\end{align*}
We have to show that $H^1(X,\sN^{m-1}(E))=0$. For $m=1$ this follows from $\sN^0(E) = \sO(E)$ and the assumption for $\ell=0$. Now let $m>1$. Since Theorem~\ref{thm:exactsequence} also applies for each $0\leq\ell<m$, we obtain the exact sequence
\begin{align*}
	\cdots\to H^1(X,\sN^{\ell-1}(E))\to
	H^1(X,\sN^\ell(E))\to\underbrace{H^1(X,\sO(E\otimes\Sym^\ell))}_{=0}\to\cdots,
\end{align*}
and it follows by induction on $\ell$ that $H^1(X,\sN^\ell(E))$ vanishes for all $0\leq\ell<m$, in particular $H^1(X,\sN^{m-1}(E))=0$.
\end{proof}

\begin{remark}\label{rmk:vanishingcohomology}
	If $X$ is a smooth projective variety (as in the case discussed below), there are natural 
	isomorphisms
	\[
		H^k((T^{1,0})^*,\sO_{\text{alg}}(\pi^*E)) \cong
		\bigoplus_{m=0}^\infty H^k(X,\sO(E\otimes\Sym^m))
	\]
	for all $k$, where $\pi:(T^{1,0})^*\to X$ is the holomorphic cotangent bundle and 
	$\sO_{\text{alg}}(\pi^*E)$ denotes the sheaf of algebraic sections in the pull-back bundle 
	$\pi^*E$. In some cases it is convenient to formulate cohomological vanishing conditions as in 
	Corollary~\ref{cor:exactsequence} (for all $m\geq0$) by vanishing conditions on the pull-back 
	bundle.
\end{remark}

\section{Cauchy--Riemann operators on flag manifolds}\label{sec:CRonFlagManifolds}
\subsection{Generalized flag manifolds}\label{sec:flagmanifolds}
We now turn to the setting of generalized flag manifolds, so $X=G/P$ where $G$ is a complex simple simply connected Lie group and $P\subseteq G$ is a parabolic subgroup. We also fix a Borel subgroup $B\subseteq P$ and a maximal torus $T\subseteq B$ with corresponding root data $\Delta\subseteq\Phi^+\subseteq\Phi:=\Phi(\frakg,\frakt)$, where $\Delta=\{\alpha_1,\ldots,\alpha_\ell\}$ are the simple and $\Phi^+$ the positive roots. Here and in the following, lower case Gothic letters denote Lie algebras of the corresponding Lie groups. We may also assume that $P$ is standard parabolic with respect to the given root data, so $P=P_\Pi$ for $\Pi\subseteq\Delta$. In more detail, we have the disjoint union $\Phi = \Xi_+\sqcup\Xi_0\sqcup\Xi_-$ with
\[
	\Xi_0 := \text{span}_\RR(\Pi)\cap\Phi,\quad
	\Xi_+ := \Phi^+\setminus\Xi_0,\quad
	\Xi_- := -\Xi_+,
\]
and setting
\[
	\frakl=\frakt \oplus \bigoplus_{\alpha\in\Xi_0}\frakg_\alpha,\quad
	\frakn^\pm=\bigoplus_{\alpha\in\Xi_{\pm}}\frakg_\alpha,
\]
where $\frakg_\alpha$ denotes the $\alpha-$root space of $\frakg$, we obtain
\[
	\frakg = \frakn^-\oplus\frakl\oplus\frakn^+, \quad
	\frakp=\frakl\oplus\frakn^+.
\]
The latter is the Levi decomposition of $\frakp$. Let $B$ denote the Killing form of $\frakg$. For each $\alpha\in\Phi$, define $H_\alpha$ by $\alpha(H) = B(H,H_\alpha)$ for all $H\in\frakt$, and choose a system $\set{x_\alpha\in\frakg_\alpha}{\alpha\in\Phi}$ such that $[x_\alpha,x_{-\alpha}]=H_\alpha$ and the structure constants $N_{\alpha,\beta}$ defined by 
\begin{align}\label{eq:structureconstants}
	[x_\alpha,x_\beta] = N_{\alpha,\beta}\,x_{\alpha+\beta}\quad(\alpha,\beta,\alpha+\beta\in\Phi)
\end{align}
are real valued, see \cite[VI.\,\S1]{Kna02}. For convenience, we also set $N_{\alpha,\beta}=0$ if $0\neq \alpha+\beta\notin\Phi$, and define $y_\alpha:=x_{-\alpha}$ for $\alpha\in\Phi$. Then the conjugate linear involution $\theta$ on $\frakg$, determined by $\theta H_\alpha = -H_\alpha$ and $\theta x_\alpha = -y_\alpha$ for $\alpha\in\Phi$, is a Cartan involution. Let $\fraku:=\frakg^\theta$ be the corresponding compact real form of $\frakg$, and $U\subseteq G$ be the connected compact Lie group with Lie algebra $\fraku$. Then, $X$ can be identified as a real manifold with the quotient $X\cong U/K$ where $K:=U\cap P$. We also note that $\theta\frakp = \frakn^-\oplus\frakl$.

\subsection{Invariant Kähler structures}
We briefly recall the classification of $U$-invariant Kähler metrics on $X=G/P$, for details we refer to \cite{A06}. The holomorphic and antiholo\-morphic tangent bundles are $G$-homogeneous vector bundles, which we identify as fiber products
\begin{align}\label{eq:tangentbdliso}
	T^{1,0}=G\times^P\frakg/\frakp,\quad T^{0,1}=G\times^P\frakg/\theta\frakp
\end{align}
with the $P$-action on the canonical fiber given by
\begin{align}\label{eq:paction}
	p.\underline v = \underline{\Ad_pv}\quad(\underline v\in\frakg/\frakp),\quad
	p.\underline w = \underline{\Ad_{\theta p} w}\quad(\underline w\in\frakg/\theta\frakp).
\end{align}
For simplicity, we underline elements to denote their equivalence classes. In the compact picture $X\cong U/K$, we may identify these bundles via the canonical $K$-equivariant isomorphisms $\frakn^-\cong\frakg/\frakp$ and $\frakn^+\cong\frakg/\theta\frakp$ with
\begin{align}\label{eq:cpttangentbdliso}
	T^{1,0}=U\times^K\frakn^-,\quad T^{0,1}=U\times^K\frakn^+.
\end{align}
Now, a $U$-invariant Kähler metric $h:T^{0,1}\times T^{1,0}\to\CC$ on $X$ is uniquely determined by its restriction to the base point $o=eK$, so $h$ uniquely corresponds to a $K$-invariant non-degenerate bilinear form
\[
	h_o:\frakn^+\times\frakn^-\to\CC,\ 
\]
such that $h_o(x,\theta x)>0$ for all $x\neq0$. There is an additional property of $h_o$ reflecting closedness of the Kähler form associated to the Kähler metric. It turns out that $h_o$ is of the form
\[
	h_o(x,y) = B(C_o,[x,y]),
\]
where $B$ is the Killing form of $\frakg$, and $C_o$ is an element of the cone
\[
	\frakz(\frakl)^+:=\set{H\in\frakz(\frakl)}
		{\alpha(H) > 0\text{ for all }\alpha\in\Delta\setminus\Pi}.
\]
Such an element $C_0$ is uniquely determined by $h_o$. Recall that $\frakz(\frakl)$ consists of elements $H\in\frakt$ satisfying $\alpha(H)=0$ for all $\alpha\in\Delta$. Conversely, any such form extends to a $U$-invariant Kähler metric, so $\frakz(\frakl)^+$ is the moduli space of $U$-invariant Kähler metrics on $X$. We like to note the coincidence that the $U$-orbit of $iC_o$ is an adjoint orbit in $\fraku$ isomorphic to $X\cong U/K$, and $h_o$ corresponds to the Kirillov--Kostant--Souriau form attached to this orbit. We finally note that for $\alpha,\beta\in\Xi_+$ and the root vectors $x_\alpha, y_\beta$ chosen in Section~\ref{sec:flagmanifolds},  we have
\begin{align}\label{eq:metriccoefficitents}
	h_o(x_\alpha,y_\beta) = c_\alpha\,\delta_{\alpha,\beta}\quad
	\text{with}\quad
	c_\alpha := \alpha(C_o)>0.
\end{align}
For later use, we also set $c_\alpha:=\alpha(C_o)$ for any $\alpha\in\frakt^*$.

\subsection{Cauchy--Riemann operators}
The goal of this section is to determine an explicit formula for the higher order Cauchy--Riemann operators. Let $E\to X$ be a $G$-homogeneous holomorphic vector bundle, so $E=G\times^P E_o$ where $E_o$ is a $P$-module. The formula in question will be given in the compact picture, i.e., smooth sections in $E$ are represented as smooth maps $f:U\to E_o$ satisfying the equivariance condition
\[
	f(uk) = k.f(u)\quad\text{for all}\quad u\in U,\,k\in K.
\]
We first recall a formula for the $\delbar$-operator on such sections, see e.g.\ \cite{GS69}. For $v\in\fraku$, let $\sR_vf$ denote the right derivative of $f$ along $v$,
\[
	\sR_vf(u):=\left.\frac{d}{dt} f(g e^{tv})\right|_{t=0},
\]
and for $x\in\frakg$, let $\sR_x^\CC f$ denote the complexified derivative defined by
\[
	\sR_x^\CC f = \sR_{v_1}f + i\,\sR_{v_2}f\quad\text{with}\quad x=v_1+iv_2,\quad 
	v_1,v_2\in\fraku.	
\]
Then, identifying the antiholomorphic cotangent bundle $(T^{0,1})^*$ with $U\times^K(\frakn^+)^*$ as in \eqref{eq:cpttangentbdliso}, the $\delbar$-operator yields a section $\delbar f$ in $E\otimes(T^{0,1})^*$ which is given by
\begin{align}\label{eq:delbarop}
	\delbar f(u)(x) = \sR_{x}^\CC f(u) + x.f(u).
\end{align}
The next step is to determine the first order Cauchy--Riemann operator. For $\alpha\in\Xi_+$, we set $\sR_\alpha:=\sR_{x_\alpha}^\CC$ with $x_\alpha$ given as in Section~\ref{sec:flagmanifolds}. Combining \eqref{eq:metriccoefficitents} and \eqref{eq:delbarop} it readily follows that the first order Cauchy--Riemann operator is given by
\begin{align}\label{eq:firstiCR}
	\iCR f(u) = \sum_{\alpha\in\Xi_+} c_\alpha^{-1}
							\left(\sR_\alpha f(u) + x_\alpha.f(u)\right)\otimes y_\alpha.
\end{align}
We note that if the canonical fiber $E_o$ of $E$ is a simple $P$-module, then the unipotent radical of $P$ acts trivially on $E_o$, so the term $x_\alpha.f(u)$ in \eqref{eq:firstiCR} vanishes. However, in general the canonical fiber of the tensor product bundle $E\otimes T^{1,0}$ is not simple, so for the iterates of the Cauchy--Riemann operator we also need the second term in \eqref{eq:firstiCR}.

For $m\geq1$ and a tuple $\balpha=(\alpha_1,\dots,\alpha_m)\in\Xi_+^m$, we set
\[
	y_\balpha:=y_{\alpha_1}\otimes\cdots\otimes y_{\alpha_m},
\]
and thus obtain a basis $\set{y_\balpha}{\balpha\in\Xi_+^m}$ of $(\frakn^-)^{\otimes m}$. In particular, a section $f\in C^\infty(X,E\otimes (T^{1,0})^{\otimes m})$ admits a decomposition
\[
	f(u) = \sum_{\balpha\in\Xi_+^m} f_\balpha(u)\otimes y_\balpha
\]
with smooth coefficient functions $f_\balpha\in C^\infty(U,E_o)$.

\begin{proposition}\label{prop:iCRoperatorOnFlagMfds}
	Let $X=G/P$, and let $E=G\times^P E_o$ be the $G$-homogeneous vector bundle associated to a 
	simple $P$-module $E_o$, and fix $m\in\NN$. In the compact picture, the action of the 
	Cauchy--Riemann operator $\iCR^m$ on $f\in C^\infty(X,E)$ is given by
	\begin{align}\label{eq:operatordecomposition}
		(\iCR^mf)(u) = \sum_{\balpha\in \Xi_+^m} (\iCR_\balpha f)(u)\otimes y_\balpha
	\end{align}
	with
	\begin{align}\label{eq:explicitiCR}
		(\iCR_\balpha f)(u) 
		 = \sum_{\sigma\in \mathfrak{S}_m} c_{\sigma(\balpha)}^{-1}\sR_{\sigma(\balpha)}f(u),
	\end{align}
	where $\sigma(\balpha) := (\alpha_{\sigma(1)},\ldots,\alpha_{\sigma(m)})$, and
	\begin{align*}
		c_{\bbeta}	&:= c_{\beta_1}\cdot c_{\beta_1+\beta_2}\cdots c_{\beta_1+\cdots+\beta_m},\qquad
		\sR_{\bbeta} :=\sR_{\beta_m}\sR_{\beta_{m-1}}\cdots\sR_{\beta_1}
	\end{align*}
	for any $\bbeta = (\beta_1,\ldots,\beta_m)\in \Xi_+^m$.
\end{proposition}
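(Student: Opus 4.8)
The plan is to prove \eqref{eq:operatordecomposition}--\eqref{eq:explicitiCR} by induction on $m$. For $m=1$ the claim is exactly \eqref{eq:firstiCR}, where the term $x_\alpha.f(u)$ vanishes because $E_o$ is a simple $P$-module so the unipotent radical $\frakn^+$ acts trivially; note $c_{(\alpha_1)}=c_{\alpha_1}$ and $\sR_{(\alpha_1)}=\sR_{\alpha_1}$, and $\mathfrak{S}_1$ is trivial, matching the stated formula. For the inductive step I would assume \eqref{eq:operatordecomposition} holds for $m-1$ and apply $\iCR$ once more. The point is that $\iCR$ acts on $f\in C^\infty(X,E\otimes(T^{1,0})^{\otimes(m-1)})$ by the Leibniz rule: it differentiates the coefficient functions $f_\balpha$ via \eqref{eq:firstiCR} (applied to the bundle $E\otimes(T^{1,0})^{\otimes(m-1)}$) and, crucially, also acts on the frame vectors $y_\balpha$. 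Here the first term of \eqref{eq:firstiCR} contributes $c_{\alpha}^{-1}\sR_\alpha f_\balpha(u)$, while the second term $x_\alpha.(\cdot)$ no longer vanishes, since the fiber $E_o\otimes(\frakn^-)^{\otimes(m-1)}$ is not a simple $P$-module.

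The heart of the calculation is therefore the action of $x_\alpha\in\frakn^+$ on the basis vectors $y_\bbeta=y_{\beta_1}\otimes\cdots\otimes y_{\beta_{m-1}}$, computed through the identification \eqref{eq:paction}--\eqref{eq:cpttangentbdliso} of $T^{1,0}$ with $G\times^P\frakg/\frakp$. On $\frakg/\frakp\cong\frakn^-$ the infinitesimal $\frakp$-action is $x.\underline{y_\beta}=\underline{[x,x_{-\beta}]}=\underline{[x_\alpha,y_\beta]}$, and by the structure constants \eqref{eq:structureconstants} this equals $N_{\alpha,-\beta}\,y_{\beta-\alpha}$ projected into $\frakn^-$, which survives only when $\beta-\alpha\in\Xi_+$. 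Thus $x_\alpha$ lowers one index $\beta_j$ to $\beta_j-\alpha$ with a structure-constant coefficient, and acts by the tensor Leibniz rule across the $m-1$ factors. The main obstacle is bookkeeping: one must show that these ``lowering'' contributions reassemble, after summing over $\alpha\in\Xi_+$ and over which factor is hit, exactly into the extra $\sigma\in\mathfrak{S}_m$ terms and the telescoping product $c_\bbeta=c_{\beta_1}c_{\beta_1+\beta_2}\cdots c_{\beta_1+\cdots+\beta_m}$, with $\sR_\bbeta=\sR_{\beta_m}\cdots\sR_{\beta_1}$ read right-to-left. The clean way to see this is to regroup the resulting sum over $\balpha\in\Xi_+^m$: for a fixed target tuple $\balpha$, the new index $\alpha_m$ together with the index that was lowered determines a transposition, and a standard identity for the structure constants (essentially $N_{\gamma,\delta}$ expressed via the Killing form and the cocycle relation, cf.\ \cite[VI.\,\S1]{Kna02}) converts the product of $N$'s into the ratio of $c_\bgamma$'s appearing in $c_{\sigma(\balpha)}^{-1}$.

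Concretely, I would fix $m$, write $\iCR^{m}f=\iCR(\iCR^{m-1}f)$, substitute the inductive formula, expand $\iCR$ via \eqref{eq:firstiCR} into its derivative part and its $x_\alpha$-part, and collect the coefficient of a fixed $y_\balpha=y_{\alpha_1}\otimes\cdots\otimes y_{\alpha_m}$. The derivative part contributes $\sum_{\sigma}c_{\sigma(\balpha')}^{-1}\,\tfrac{1}{c_{\alpha_m}}\,\sR_{\alpha_m}\sR_{\sigma(\balpha')}f$ where $\balpha'=(\alpha_1,\dots,\alpha_{m-1})$; the $x$-part contributes a sum over $1\le j\le m-1$ and over roots $\alpha$ with $\alpha_j=\gamma-\alpha$ for some $\gamma\in\Xi_+$, with coefficient involving $N_{\alpha,-\gamma}$ times the inductive $c^{-1}$. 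I then verify, using $\alpha(C_o)=c_\alpha$ linearly extended (as in \eqref{eq:metriccoefficitents} and the sentence after) so that $c_{\beta+\gamma}=c_\beta+c_\gamma$, together with the structure-constant identities, that these two groups of terms combine into $\sum_{\sigma\in\mathfrak{S}_m}c_{\sigma(\balpha)}^{-1}\sR_{\sigma(\balpha)}f$; the $m!$ permutations of $\{1,\dots,m\}$ arise as the $(m-1)!$ permutations of the first $m-1$ indices, each split into $m$ ways according to where the index $m$ is inserted — the ``straight'' insertion at the end coming from the derivative term and the other $m-1$ from the $x_\alpha$-term. Finally, the fact that the output is a section of $\Sym^m$ (not just $(T^{1,0})^{\otimes m}$), so that the $\balpha$ may be taken in $\Xi_+^m$ with the symmetrized interpretation of $y_\balpha$, follows from \cite[Lemma~2.0]{S86} as already recorded in Section~\ref{subsec:CauchyRiemannOperators}, which also guarantees the resulting coefficients $\iCR_\balpha f$ are well-defined and symmetric in $\balpha$.
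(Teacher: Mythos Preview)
Your overall strategy---induction on $m$, base case via \eqref{eq:firstiCR}, and applying $\iCR$ once more using the Leibniz rule with the non-trivial $x_\alpha$-action on the tensor factors $y_\bbeta$---is exactly the route the paper takes. You also correctly identify the reindexing that produces the recurrence (the paper states it as Lemma~\ref{lem:iCRrecurrence}): the coefficient of $y_{\balpha}\otimes y_\beta$ in $\iCR(\iCR^{m-1}f)$ is
\[
  c_\beta^{-1}\Big(\sR_\beta\circ\iCR_{\balpha}
  +\sum_{k=1}^{m-1}N_{\alpha_k,\beta}\,\iCR_{\balpha+\beta\cdot e_k}\Big).
\]

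The gap is in the last step. You assert that ``a standard identity for the structure constants \ldots\ converts the product of $N$'s into the ratio of $c_\bgamma$'s'' and that the $m-1$ non-trivial insertion positions arise \emph{directly} from the $x_\alpha$-terms. Neither is correct. The constants $N_{\alpha,\beta}$ are intrinsic to the Chevalley basis and do not depend on the K\"ahler datum $C_o$, whereas the $c_\gamma=\gamma(C_o)$ do; there is no identity expressing $N_{\alpha,\beta}$ in terms of the $c$'s. What actually happens is that, after applying the inductive formula to $\iCR_{\balpha+\beta\cdot e_k}$, the factor $N_{\alpha_k,\beta}$ sits in front of a single derivative $\sR_{\alpha_k+\beta}$ (occurring at position $\ell=\sigma^{-1}(k)$ in the string $\sR_{\sigma(\balpha+\beta\cdot e_k)}$), not in front of a $c$-ratio. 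The key observation you are missing is that $x\mapsto\sR_x^\CC$ is a Lie algebra homomorphism, so
\[
  N_{\alpha_k,\beta}\,\sR_{\alpha_k+\beta}=[\sR_{\alpha_k},\sR_\beta],
\]
which replaces each such term by a \emph{difference} of two strings with $\sR_\beta$ inserted on either side of $\sR_{\alpha_k}$. Summing over $k$ (equivalently over $\ell$) these differences telescope, leaving exactly the $m$ strings $\sR_{\alpha_{\sigma(m-1)}}\cdots\sR_{\alpha_{\sigma(\ell)}}\sR_\beta\sR_{\alpha_{\sigma(\ell-1)}}\cdots\sR_{\alpha_{\sigma(1)}}$ for $\ell=1,\dots,m$ (the case $\ell=m$ combining with the derivative part). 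Only \emph{after} this telescoping does the additivity $c_{\alpha+\beta}=c_\alpha+c_\beta$ enter, to verify that the surviving coefficients $c_\beta^{-1}(c_{\sigma(\balpha)+\beta\cdot e_\ell}^{-1}-c_{\sigma(\balpha)+\beta\cdot e_{\ell-1}}^{-1})$ (and the two boundary terms) equal the required $c_{\tau(\balpha,\beta)}^{-1}$. So the structure constants are eliminated by the Lie homomorphism property, not converted into metric data; your proposed mechanism would not close the argument.
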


\begin{remark}
	For the setting of homogeneous vector bundles on flag manifolds, 
	Proposition~\ref{prop:iCRoperatorOnFlagMfds} gives an alternative proof of the 
	general fact that $\iCR^mf$ is indeed a section in the subbundle $E\otimes\Sym^m$ of 
	$E\otimes(T^{1,0})^{\otimes m}$.
\end{remark}

For the proof of Proposition~\ref{prop:iCRoperatorOnFlagMfds}, let $\iCR_\balpha$ be the operator defined by the decomposition \eqref{eq:operatordecomposition}, which acts on functions $f\in C^\infty(U,E_o)$. We then show that \eqref{eq:explicitiCR} holds.

\begin{lemma}\label{lem:iCRrecurrence}
	For $m>1$, the operators $\iCR_\balpha$ satisfy the recurrence relation
	\[
		\iCR_{(\alpha_1,\dots,\alpha_m)}
			= c_{\alpha_m}^{-1}\Big(\sR_{\alpha_m}\circ\iCR_{(\alpha_1,\ldots,\alpha_{m-1})}
			 +\sum_{k=1}^{m-1}N_{\alpha_k,\alpha_m}
			  \iCR_{(\alpha_1,\dots,\alpha_k+\alpha_m,\dots,\alpha_{m-1})}\Big),
	\]
	where $N_{\alpha,\beta}$ are the structure constants defined in \eqref{eq:structureconstants}.
\end{lemma}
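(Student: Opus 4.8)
The plan is to derive the recurrence by writing $\iCR^m = \iCR \circ \iCR^{m-1}$ and carefully tracking how the extra application of the first-order operator \eqref{eq:firstiCR} acts on the tensor decomposition \eqref{eq:operatordecomposition}. First I would apply the induction hypothesis (or just the definition) to write $(\iCR^{m-1}f)(u) = \sum_{\bbeta \in \Xi_+^{m-1}} (\iCR_{\bbeta} f)(u) \otimes y_{\bbeta}$. I then need to apply $\iCR$ to this; but note that $\iCR^{m-1}f$ is a section of $E \otimes (T^{1,0})^{\otimes(m-1)}$, so the relevant $\delbar$-operator is that of the bundle $E \otimes (T^{1,0})^{\otimes(m-1)}$, whose canonical $P$-fiber is $E_o \otimes (\frakg/\frakp)^{\otimes(m-1)}$. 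Even though $E_o$ is simple (so the unipotent radical acts trivially on $E_o$), it does \emph{not} act trivially on the tensor factors $\frakg/\frakp \cong \frakn^-$, so the second term $x_\alpha . f(u)$ in \eqref{eq:firstiCR} contributes here and produces the structure-constant terms $N_{\alpha_k,\alpha_m}$.

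Concretely, the key computation is: for a tuple $\bbeta = (\alpha_1,\dots,\alpha_{m-1})$ and the element $y_{\bbeta} = y_{\alpha_1}\otimes\cdots\otimes y_{\alpha_{m-1}} \in (\frakn^-)^{\otimes(m-1)}$, I must compute $x_{\alpha_m} . y_{\bbeta}$ under the $P$-action on the canonical fiber. Using the identification \eqref{eq:cpttangentbdliso} and the $P$-action \eqref{eq:paction} — that is, the adjoint action on $\frakg/\frakp$ — each factor transforms by $\ad_{x_{\alpha_m}}$, so by Leibniz $x_{\alpha_m}.y_{\bbeta} = \sum_{k=1}^{m-1} y_{\alpha_1}\otimes\cdots\otimes [x_{\alpha_m}, y_{\alpha_k}]\otimes\cdots\otimes y_{\alpha_{m-1}}$ modulo $\frakp$ in the relevant slot. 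Now $[x_{\alpha_m}, y_{\alpha_k}] = [x_{\alpha_m}, x_{-\alpha_k}]$; since $\alpha_k, \alpha_m \in \Xi_+$ with $\alpha_m \neq \alpha_k$ possible and $\alpha_k - \alpha_m$ is generically not a root, the component that survives modulo $\frakp$ (equivalently, lies in $\frakn^-$) is $N_{\alpha_m, -\alpha_k}\, x_{\alpha_m - \alpha_k}$ when $\alpha_k - \alpha_m \in \Xi_+$, and one rewrites this in terms of $N_{\alpha_k,\alpha_m}$ and $y_{\alpha_k - \alpha_m}$; more relevantly, the $\sR_{\alpha_m}$ part combined with re-indexing $\alpha_k + \alpha_m$ in the \emph{subscript} tuple yields the $\iCR_{(\alpha_1,\dots,\alpha_k+\alpha_m,\dots,\alpha_{m-1})}$ terms after collecting the $c$-factors. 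Finally I would read off the coefficient of $y_{\balpha} = y_{\alpha_1}\otimes\cdots\otimes y_{\alpha_m}$ on both sides: the $\sR_{\alpha_m}$-term from \eqref{eq:firstiCR} gives $c_{\alpha_m}^{-1}\sR_{\alpha_m}\circ \iCR_{(\alpha_1,\dots,\alpha_{m-1})}$, and the $x_{\alpha_m}.(-)$-term gives the sum over $k$ with the structure constants, after matching the $c_{\alpha_m}^{-1}$ prefactor coming from the $c_\alpha^{-1} = c_{\alpha_m}^{-1}$ weight in \eqref{eq:firstiCR}.

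The main obstacle I expect is the bookkeeping of indices and of the $c_{\bbeta}$ weights: one must be careful that the ``$c_\alpha$'' appearing in \eqref{eq:firstiCR} for the bundle $E\otimes(T^{1,0})^{\otimes(m-1)}$ is still $c_{\alpha_m} = \alpha_m(C_o)$ (the metric on the base is unchanged), and that contracting $y_\alpha$ from \eqref{eq:firstiCR} with the tensor slot does not introduce spurious factors — here the convention $c_{\bbeta} := c_{\beta_1}c_{\beta_1+\beta_2}\cdots c_{\beta_1+\cdots+\beta_m}$ is designed precisely so that appending $\alpha_m$ multiplies by $c_{\beta_1+\cdots+\beta_{m-1}+\alpha_m}$, which is \emph{not} the same as the $c_{\alpha_m}^{-1}$ in the recurrence; resolving this apparent mismatch requires noting that the recurrence is stated for the operators $\iCR_\balpha$ as defined by \eqref{eq:operatordecomposition} and that formula \eqref{eq:explicitiCR} (with its $c_{\sigma(\balpha)}$) is only proved \emph{afterwards} from this recurrence, so at this stage one only needs the bundle-level identity $\iCR = \sum_\alpha c_\alpha^{-1}(\sR_\alpha + x_\alpha.(-))\otimes y_\alpha$ and the Leibniz computation above — no compatibility with \eqref{eq:explicitiCR} is needed yet. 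The second, more routine, subtlety is sign and ordering conventions for $N_{\alpha,\beta}$ versus $N_{\alpha,-\beta}$, which I would pin down once using $N_{\alpha,\beta} = -N_{\beta,\alpha}$ and the relations among structure constants in \cite[VI.\,\S1]{Kna02}.
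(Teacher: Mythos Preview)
Your plan is correct and follows essentially the same route as the paper: write $\iCR^m=\iCR\circ\iCR^{m-1}$, apply \eqref{eq:firstiCR} to the section of $E\otimes(T^{1,0})^{\otimes(m-1)}$, use the Leibniz rule for $x_\beta.y_{\balpha}$ in $\frakg/\frakp$, then re-index via $\alpha_k'=\alpha_k-\beta$ and invoke the structure-constant identity $N_{\beta,-\beta-\alpha_k'}=N_{\alpha_k',\beta}$ to read off the coefficient of $y_{(\balpha,\beta)}$. Your observation that no compatibility with \eqref{eq:explicitiCR} is needed at this stage is exactly right; the only thing to clean up is the slightly tangled sentence around $[x_{\alpha_m},y_{\alpha_k}]$, where the paper simply notes that terms with $\alpha_k'\notin\Xi_+$ contribute zero since either $\underline{y_{\alpha_k'}}$ or $N_{\beta,-\beta-\alpha_k'}$ vanishes.
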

\begin{proof}
For the proof it is more convenient to identify the canonical fiber $\frakn^-$ of $T^{1,0}$ with $\frakg/\frakp$ in order to have simple formulas for the $\frakp$-action on $\frakn^-$. As in \eqref{eq:paction} we underline elements to denote their equivalence classes. Applying \eqref{eq:firstiCR} to the section $\iCR^{m-1}f$ yields
\begin{align*}
	\iCR^mf &= \iCR(\iCR^{m-1}f)\\
		&= \sum_{\beta\in\Xi_+}c_\beta^{-1}\Big(\sR_\beta(\iCR^{m-1}f) + x_\beta.(\iCR^{m-1}f)\Big)
			\otimes\underline{y_\beta}\\
		&= \sum_{\balpha\in\Xi_+^{m-1},\,\beta\in\Xi_+} c_\beta^{-1}
			 \Big((\sR_\beta\circ\iCR_{\balpha}f)\otimes\underline{y_\balpha}
			 +\iCR_{\balpha}f\otimes \underline{x_\beta. y_\balpha}\Big)
			 \otimes\underline{y_\beta},
\end{align*}
where
\begin{align*}
	\underline{x_\beta.y_\balpha} &=
		\sum_{k=1}^{m-1} \underline{y_{\alpha_1}\otimes\cdots\otimes[x_\beta,y_{\alpha_k}]
		\otimes\cdots\otimes y_{\alpha_{m-1}}}\\
		&=\sum_{k=1}^{m-1} N_{\beta,-\alpha_k}\,
			\underline{y_{\alpha_1}\otimes\cdots\otimes y_{\alpha_k-\beta}
			\otimes\cdots\otimes y_{\alpha_{m-1}}},
\end{align*}
Setting $\alpha_k'=\alpha_k-\beta$, we obtain 
\[
	\sum_{\alpha_k\in\Xi_+} N_{\beta,-\alpha_k}\iCR_{\balpha}f\otimes\underline{y_{\alpha_k-\beta}}
	= \sum_{\alpha_k'\in\Xi_+} N_{\beta,-\beta-\alpha_k'}
		\iCR_{(\alpha_1,\dots,\alpha_k'+\beta,\dots,\alpha_{m-1})}f
		\otimes\underline{y_{\alpha_k'}}.
\]
We note that the summation over $\alpha_k'\in\Phi^+$ yields the same as the summation over $\alpha_k'\in-\beta+\Phi^+$, since either $\underline{y_{\alpha_k'}}$ or $N_{\beta,-\beta-\alpha_k'}$ vanishes. Finally, using $N_{\beta,-\beta-\alpha_k'} = N_{\alpha_k',\beta}$ we conclude that
\[
	\iCR^mf = \sum_{\balpha\in\Xi_+^{m-1},\,\beta\in\Xi_+} c_\beta^{-1}
			 \Big((\sR_\beta\circ\iCR_{\balpha}f)
			 +\sum_{k=1}^{m-1}N_{\alpha_k,\beta}\iCR_{\balpha+\beta\cdot e_k}f\Big)
			 \otimes\underline{y_\balpha}\otimes\underline{y_\beta},
\]
where $\balpha+\beta\cdot e_k=(\alpha_1,\cdots,\alpha_k+\beta,\cdots,\alpha_m)$.
\end{proof}

Using the recurrence relation of Lemma~\ref{lem:iCRrecurrence} we now prove Proposition~\ref{prop:iCRoperatorOnFlagMfds} by induction on $m$. For $m=1$, \eqref{eq:operatordecomposition} and \eqref{eq:explicitiCR} reduce to \eqref{eq:firstiCR}, since $E_o$ is assumed to be a simple $P$-module. Now let $m>1$. Due to Lemma~\ref{lem:iCRrecurrence} and by induction hypothesis, we have
\begin{align*}
	\iCR_{(\balpha,\beta)}f
		&= c_\beta^{-1}\Big((\sR_\beta\circ\iCR_{\balpha}f)
			 +\sum_{k=1}^{m-1}N_{\alpha_k,\beta}\iCR_{\balpha+\beta\cdot e_k}f\Big)\\
		&= \sum_{\sigma\in S_{m-1}}
					\Big(c_\beta^{-1}c_{\sigma(\balpha)}^{-1}\sR_\beta\sR_{\sigma(\balpha)}f
					+ c_\beta^{-1}\sum_{k=1}^{m-1} N_{\alpha_k,\beta}
						c_{\sigma(\balpha + \beta\cdot e_k)}^{-1}\sR_{\sigma(\balpha+\beta\cdot e_k)}f\Big).
\end{align*}
We next consider the sum over the second term. For fixed $1\leq k\leq m-1$, the symmetric group $\SymGp_{m-1}$ can be written as disjoint union
\[
	\SymGp_{m-1} = \coprod_{\ell=1}^{m-1}\,\Set{\sigma\in\SymGp_{m-1}}{\sigma(\ell)=k}.
\]
Recall that $\sR_\alpha=\sR_{x_\alpha}^\CC$ denotes the complexified right derivative of functions on $U$. Therefore, the map $x\mapsto\sR_x^\CC$ with $x\in\frakg$ is a Lie algebra homomorphism, so in particular, $N_{\alpha,\beta}\sR_{\alpha+\beta} = [\sR_\alpha,\sR_\beta]$. Therefore, the second term yields
\begin{align*}
	\sum_{\ell,k=1}^{m-1}&\sum_{\substack{\sigma\in\SymGp_{m-1}\\ \sigma(\ell)=k}}
		N_{\alpha_k,\beta}\,c_{\sigma(\balpha)+\beta\cdot e_\ell}^{-1}
		\sR_{\alpha_{\sigma(m-1)}}\cdots \sR_{\alpha_{\sigma(\ell)}+\beta}\cdots 
		\sR_{\alpha_{\sigma(1)}}f\\
	&= \sum_{\ell,k=1}^{m-1}\sum_{\substack{\sigma\in\SymGp_{m-1}\\ \sigma(\ell)=k}}
		c_{\sigma(\balpha)+\beta\cdot e_\ell}^{-1}
		\sR_{\alpha_{\sigma(m-1)}}\cdots[\sR_{\alpha_{\sigma(\ell)}},\sR_\beta]\cdots
		\sR_{\alpha_{\sigma(1)}}f\\
	&= \sum_{\ell=1}^{m-1}\sum_{\sigma\in\SymGp_{m-1}}
		c_{\sigma(\balpha)+\beta\cdot e_\ell}^{-1}
		\Big(\sR_{\alpha_{\sigma(m-1)}}\cdots \sR_{\alpha_{\sigma(\ell)}}\sR_\beta\cdots
		\sR_{\alpha_{\sigma(1)}}f\\
	&\hspace{4cm}	- \sR_{\alpha_{\sigma(m-1)}}\cdots \sR_\beta \sR_{\alpha_{\sigma(\ell)}}\cdots
		\sR_{\alpha_{\sigma(1)}}f\Big)\\[4mm]
	&= \sum_{\sigma\in S_{m-1}}c_{\sigma(\balpha)+\beta\cdot e_1}^{-1} 
			\sR_{\sigma(\balpha)}\sR_\beta f\\
	&\quad +\sum_{\ell=2}^{m-1}\sum_{\sigma\in\SymGp_{m-1}}
		 \Big(c_{\sigma(\balpha)+\beta\cdot e_{\ell}}^{-1}
		 	- c_{\sigma(\balpha)+\beta\cdot e_{\ell-1}}^{-1}\Big) \sR_{\alpha_{\sigma(m-1)}}\cdots 
		 	\sR_{\alpha_{\sigma(\ell)}}\sR_\beta\cdots \sR_{\alpha_{\sigma(1)}}f\\
	&\quad -\sum_{\sigma\in\SymGp_{m-1}} c_{\sigma(\balpha) + \beta\cdot e_{m-1}}^{-1}
		\sR_\beta \sR_{\sigma(\balpha)}f
\end{align*}
Collecting everything together we obtain
\begin{align*}
	\iCR_{(\balpha,\beta)}f 
		&= \sum_{\sigma\in\SymGp_{m-1}}c_\beta^{-1}\Big(c_{\sigma(\balpha)}^{-1} - 
		c_{\sigma(\balpha)+\beta\cdot e_{m-1}}^{-1}\Big)\sR_\beta\sR_{\sigma(\balpha)}f \\
	&+\sum_{\ell=2}^{m-1}\sum_{\sigma\in\SymGp_{m-1}}
		 c_\beta^{-1}\Big(c_{\sigma(\balpha)+\beta\cdot e_\ell}^{-1}
		 	- c_{\sigma(\balpha)+\beta\cdot e_{\ell-1}}^{-1}\Big)\sR_{\alpha_{\sigma(m-1)}}\cdots 
		 	\sR_{\alpha_{\sigma(\ell)}}\sR_\beta\cdots\sR_{\alpha_{\sigma(1)}}f\\[2mm]
	&+\sum_{\sigma\in\SymGp_{m-1}}c_\beta^{-1}c_{\sigma(\balpha)+\beta\cdot e_1}^{-1}
		 \sR_{\sigma(\balpha)}\sR_\beta f.
\end{align*}
It remains to show that the coefficients have the appropriate form. For convenience we set $\bgamma:=\sigma(\balpha)$. Recall that $c_\alpha = \alpha(C_0)$, so that $c_\alpha$ is additive in $\alpha$, i.e., $c_{\alpha+\beta}=c_\alpha+c_\beta$. It follows that
\begin{align*}
	c_\beta^{-1}(c_\bgamma^{-1} - c_{\bgamma+\beta\cdot e_{m-1}}^{-1})
	 &= c_\beta^{-1}c_\bgamma^{-1}\Big(1 -
			\frac{c_{\gamma_1+\cdots+\gamma_{m-1}}}
			 		 {c_{\gamma_1+\cdots+\gamma_{m-1}+\beta}}\Big)\\
	 &= c_\bgamma^{-1}c_{\gamma_1+\cdots+\gamma_{m-1}+\beta}^{-1}\\
	 &= c_{(\bgamma,\beta)}^{-1},\\[2mm]
	c_\beta^{-1}\Big(c_{\bgamma+\beta\cdot e_\ell}^{-1}
		 	- c_{\bgamma+\bbeta_{\ell-1}}^{-1}\Big)
		&= c_\beta^{-1}c_{\bgamma+\beta\cdot e_\ell}^{-1}\Big(
				1-\frac{c_{\gamma_1+\cdots+\gamma_{\ell-1}}}
					{c_{\gamma_1+\cdots+\gamma_{\ell-1}+\beta}}
				\Big)\\
		&= c_{\bgamma+\beta\cdot e_\ell}^{-1}
			c_{\gamma_1+\cdots+\gamma_{\ell-1}+\beta}^{-1}\\
		&= c_{(\gamma_1,\ldots,\gamma_\ell,\beta,\gamma_{\ell+1},\ldots,\gamma_{m-1})}^{-1},\\[2mm]
	c_\beta^{-1}c_{\bgamma+\beta\cdot e_1}^{-1} 
		&= c_{(\beta,\bgamma)}^{-1}.
\end{align*}
We thus conclude that
\[
	\iCR_{(\balpha,\beta)}f
	= \sum_{\sigma\in\SymGp_m} c_{\sigma(\balpha,\beta)}^{-1}\sR_{\sigma(\balpha,\beta)}f.
\]
This completes the proof of Proposition~\ref{prop:iCRoperatorOnFlagMfds}.\qed

\subsection{Proof of Theorem~\ref{thm:A}}
We will now prove that the space of nearly holomorphic sections in $E$ coincides with the space of $U$-finite smooth sections in $E$:
\begin{align}\label{eq:UFinEqNHS}
	\sN(X,E) = C^\infty(X,E)_{U-\text{finite}}.
\end{align}
By Corollary~\ref{cor:finitedimensionality}, each $\sN^m(X,E)$ is finite dimensional. Moreover, $\sN^m(X,E)$ is $U$-invariant, since Cauchy--Riemann operators are equivariant for bundle morphisms that restrict to isometries on the base manifold, see e.g.\ \cite{EP96}. For flag manifolds, this equivariance is also obvious from Proposition~\ref{prop:iCRoperatorOnFlagMfds}. It follows that $\sN^m(X,E)$ consists of $U$-finite vectors. Conversely, let $f\in C^\infty(X,E)$ be $U$-finite, and let $V:=\text{span}_\CC(U.f)$ denote the finite dimensional subspace generated by the $U$-action on $f$. In the compact picture, the action of $v\in\fraku$ on $F\in V$ is given (up to a sign) by left derivation, 
\[
	\sL_vF(u) = \left.\tfrac{d}{dt}F(e^{-tv}u)\right|_{t=0}.
\]
Let $\sL^\CC_xF$ denote the complexified action of $x\in\fraku_\CC=\frakg$. Since $V$ is finite dimensional, it follows that $\sL_{x_1}^\CC\cdots \sL_{x_m}^\CC F=0$ for all $F\in V$ and $x_1,\ldots,x_m\in\frakn^+$ if $m$ is sufficiently large $m\gg 0$. Since $\sL_v=-\sR_v$ at $u=e$, we conclude from Proposition~\ref{prop:iCRoperatorOnFlagMfds} that $(\iCR^mF)(e)=0$ for all $F\in V$ and sufficiently large $m\gg 0$. Now, the $U$-equivariance of the Cauchy--Riemann operators yield $\iCR^mf(u)=0$ for all $u\in U$, i.e., $f$ is nearly holomorphic. This proves \eqref{eq:UFinEqNHS}. Finally, recall the general fact on induced representations, that the space of $U$-finite smooth sections in $E$ is dense in $C^\infty(X,E)$ with respect to uniform convergence. This completes the proof of Theorem~\ref{thm:A}.\qed

\section{Hilbert series on flag manifolds}\label{sec:HilbertSeries}
We retain all definitons of the last section concerning generalized flag manifolds and the associated root data.

\subsection{$P$-version of Lusztig's polynomials}
Recall that $\Xi_+$ denotes the set of roots with root spaces contained in the nilradical $\frakn^+$ of $\frakp$. The $P$-version of Kostant's partition function is defined in a formal way by 
\[
	\prod_{\alpha\in\Xi_+} (1-qe^\alpha)^{-1}
		=: \sum_{\lambda\in\Lambda} \wp_{P,q}(\lambda)\,e^\lambda.
\]
In other words, the coefficient of $q^m$ in $\wp_{P,q}(\lambda)$ is the number of ways to write $\lambda\in\Lambda$ as the sum of precisely $m$ (not necessarily distinct) roots in $\Xi_+$. Then, Lusztig's polynomials \eqref{eq:Lusztigqpoly} generalize to
\begin{align*}
	m_\lambda^\mu(P;q) :=\sum_{w\in W} \sgn(w)\,\wp_{P,q}(w(\lambda+\rho) - \mu-\rho),
\end{align*}
where $\rho$ is the half sum of positive roots, and $W$ is the Weyl group of $(\frakg,\frakt)$. For $P=B$ and $q=1$, this is the classical Kostant multiplicity formula which determines the dimension of the $\mu$-weight space in a simple $U$-module of highest weight $\lambda$. The link between these purely combinatorial formulas and the geometry of the flag manifold $X=G/P$ is given below in Proposition~\ref{prop:EulerCharacter}. There, we determine the graded Euler character of a $G$-homogeneous vector bundle $E=G\times^PE_o$ defined by
\begin{align}\label{eq:EulerCharacter}
	\chi_{\text{gr}}(X,E) &:= \sum_{m\geq0} \chi(X,E\otimes\Sym^m)\,q^m,
\end{align}
where 
\[
	\chi(X,F)=\sum_{k\geq0} (-1)^k\ch(H^k(X,\sO(F)))
\]
denotes the usual Euler character of a $G$-homogeneous vector bundle $F=G\times^PF_0$. Here, $\ch(M)$ denotes the formal character of a finite dimensional $G$-module $M$.
%
Let $\Lambda_P^+$ denote the set of $P$-dominant weights of the weight lattice $\Lambda$, so $\lambda\in\Lambda_P^+$ if $\lambda(H_\alpha)>0$ for all $\alpha\in\Pi$. Then $\Lambda_P^+\supseteq\Lambda^+$, and $\Lambda_P^+$ parametrizes the set of simple $P$-modules by their highest weights. With our choice of positivity in the root system, it is more convenient to formulate the following statements by means of the dual vector bundle $E^*=G\times^PE_o^*$. For example, the Borel--Weil theorem states that $\sO(X,E^*)$ is non-trivial if and only if the highest weight $\mu$ of $E_o$ is dominant, and in this case, $\sO(X,E^*)\cong V_\mu^*$, where $V_\mu^*$ denotes the dual of the simple $U$-module $V_\mu$ the highest weight $\mu$, and all higher cohomology of $\sO(E^*)$ vanishes.

\begin{proposition}\label{prop:EulerCharacter}
	Let $E_\mu=G\times^P E_o$ be the $G$-homogeneous vector bundle on $X=G/P$ associated to the 
	simple $P$-module $E_o$ of highest weight $\mu\in\Lambda^+_P$. Then,
	\begin{align}\label{eq:gradedcharacter}
		\chi_{\text{gr}}(X,E_\mu^*)
			= \sum_{\lambda\in\Lambda^+} m_\lambda^\mu(P;q)\,\ch V_\lambda^*.
	\end{align}
\end{proposition}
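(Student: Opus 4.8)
The plan is to compute $\chi_{\text{gr}}(X,E_\mu^*)$ by exploiting the exact sequence \eqref{eq:exactsequence} and then recognizing the result as a known Weyl-type character formula. First I would observe that the Euler characteristic is additive on short exact sequences of coherent sheaves. Applying $\chi$ to the sequence of Theorem~\ref{thm:exactsequence} for the bundle $E_\mu^*$ (replacing $E$ by $E_\mu^*$), I obtain $\chi(X,\sN^m(E_\mu^*)) = \chi(X,\sN^{m-1}(E_\mu^*)) + \chi(X,\sO(E_\mu^*\otimes\Sym^m))$, and summing the telescoping relation gives $\sum_{m\geq 0}\chi(X,\sO(E_\mu^*\otimes\Sym^m))\,q^m$ as a genuine two-variable object; but in fact I only need the right-hand side directly, namely $\chi_{\text{gr}}(X,E_\mu^*) = \sum_{m\geq 0}\chi(X,\sO(E_\mu^*\otimes\Sym^m))\,q^m$, which is exactly the definition \eqref{eq:EulerCharacter}. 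So the sequence is not strictly needed here — the real content is a Borel--Weil--Bott computation of each $\chi(X,\sO(E_\mu^*\otimes\Sym^m))$ together with a combinatorial identification.

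The key step is to compute $\chi(X,\sO(E_\mu^*\otimes\Sym^m))$ using the Weyl character formula / Bott's theorem on $G/P$. The bundle $E_\mu^*\otimes\Sym^m$ is $G$-homogeneous with fiber $E_o^*\otimes\Sym^m(\frakg/\frakp)^*$, and as a $P$-module (hence as an $\frakl$-module after restriction) its character can be read off: the $T$-weights of $\Sym^m(\frakg/\frakp)$ are exactly the sums of $m$ roots from $\Xi_+$, counted with the multiplicity recorded by $\wp_{P,q}$. By the homogeneous-bundle Euler characteristic formula (the $G/P$ version of Weyl--Bott, valid for the Euler character even without dominance), $\chi(X,\sO(F^*))$ for $F=G\times^P F_o$ equals $\sum_{w\in W}\sgn(w)\,[\text{character contribution}]$; concretely, writing the formal character of $F_o$ (with its weights $\nu$ and multiplicities) and applying the alternating sum over $W$ with the $\rho$-shift produces $\sum_{\lambda}(\text{mult})\,\ch V_\lambda^*$ where the multiplicities are the Kostant-type alternating sums. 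Carrying this through with $F_o = E_o^*\otimes\Sym^m(\frakg/\frakp)^*$ and tracking the $q^m$ grading, the weight multiplicities that appear are precisely $\sum_{w\in W}\sgn(w)\,\wp_{P,q}(w(\lambda+\rho)-\mu-\rho) = m_\lambda^\mu(P;q)$, giving \eqref{eq:gradedcharacter}.

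The main obstacle — and the step requiring the most care — is bookkeeping the duals, the $\rho$-shifts, and the sign conventions consistently, so that the alternating sum over $W$ really collapses to the stated formula with $V_\lambda^*$ (rather than $V_\lambda$) and with the argument $w(\lambda+\rho)-\mu-\rho$ in $\wp_{P,q}$ rather than some reflected variant. In particular I must use the $P$-version of Kostant's partition function correctly: the generating identity $\prod_{\alpha\in\Xi_+}(1-qe^\alpha)^{-1} = \sum_\lambda \wp_{P,q}(\lambda)e^\lambda$ says that the graded character of $\bigoplus_m \Sym^m(\frakn^+)$ is $\sum_\lambda \wp_{P,q}(\lambda)e^\lambda$, and since $T^{1,0}=G\times^P\frakg/\frakp\cong G\times^P\frakn^-$ as $K$-bundles with $\frakn^-$ carrying the negative roots, one must verify the matching of $\Sym^m$ of the \emph{holomorphic} tangent bundle with the correct sign of roots so that the dual bundle $E_\mu^*\otimes\Sym^m$ has fiber weights $-\mu + (\text{sums of }m\text{ roots in }\Xi_+)$ up to the relevant identification — this is where the positivity convention chosen after the Borel--Weil statement (formulating everything via $E^*$) pays off. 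Once the dictionary is fixed, the computation is a formal manipulation of characters: expand, apply the Weyl denominator identity on $G/P$, and collect coefficients of $\ch V_\lambda^*\,q^m$.
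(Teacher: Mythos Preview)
Your approach is correct and is essentially the same as the paper's: compute each $\chi(X,\sO(E_\mu^*\otimes\Sym^m))$ via Borel--Weil--Bott and identify the resulting alternating sum with $m_\lambda^\mu(P;q)$. The one substantive difference is in how the ``homogeneous-bundle Euler characteristic formula'' you invoke as a black box is justified. The fiber $E_o^*\otimes\Sym^m(\frakg/\frakp)$ is in general not a semisimple $P$-module (the nilradical acts nontrivially), so one cannot simply decompose and apply Bott on $G/P$ termwise; what is needed is the statement that $\chi$ depends only on the $T$-character of the fiber. The paper makes this step explicit by pulling everything back along $\proj:G/B\to G/P$: a Leray spectral sequence gives $H^k(G/P,\sO(E_\mu^*\otimes\Sym^m))\cong H^k(G/B,\sO(L_{-\mu}\otimes\proj^*\Sym^m))$, and then the $B$-module $\frakg/\frakp$ is filtered with one-dimensional root-space quotients indexed by $\Xi_-$, so additivity of $\chi$ reduces to line bundles $L_{-\tau}$ on $G/B$, where Borel--Weil--Bott applies directly. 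This is exactly the mechanism behind the formula you quote, so your proof and the paper's coincide once that step is unpacked; your caution about duals and $\rho$-shifts is well placed and is handled in the paper by this reduction together with the standard BWB computation of $\chi(G/B,L_{-\tau})$.
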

\begin{proof}
For $P=B$, this is a well-known result of Brylinski \cite{Bry89}, based on the work of Hesselink \cite{He76}. The case of general parabolic subgroups is partially discussed more recently in \cite{Ha09}. Let $L_\nu=G\times^B\CC_\nu$ denote the line bundle on $G/B$ associated to $\nu\in\Lambda$, and let $\proj^*\Sym^m$ be the pull-back of $\Sym^m$ along the canonical projection $\proj:G/B\to G/P$. Then, $\proj^*\Sym^m = G\times^B\Sym^m(\frakg/\frakp)$, and applying Leray spectral sequences to the push forward of $L_{-\mu}$ along $\proj$ one shows \cite[Lemma~3.25]{Ha09} that
\[
	H^k(G/P,\sO(E_\mu^*\otimes\Sym^m))\cong H^k(G/B,\sO(L_{-\mu}\otimes \proj^*\Sym^m)).
\]
Since the $B$-module $\frakg/\frakp$ admits a filtration by $B$-stable subspaces such that consecutive quotients are the root spaces in $\frakn^-$ and hence enumerated by $\Xi_-=-\Xi_+$, it follows by additivity of the Euler character that
\[
	\chi_{\text{gr}}(X,E_\mu^*) = \sum_{\tau\in\Lambda}\wp_{P,q}(\tau-\mu)\,\chi(G/B,L_{-\tau}).
\]
Finally, the character $\chi(G/B,L_{-\tau})$ is determined by the Borel--Weil--Bott theorem, and comparison of the coefficients yields \eqref{eq:gradedcharacter}.
\end{proof}

\subsection{Cohomological vanishing condition}\label{sec:vanishingcohomology}
We now consider vector bundles $E\to X$ satisfying the following cohomological vanishing condition:
\begin{align}\label{eq:vanishingcondition}\tag{V}
	H^k(X,\sO(E^*\otimes\Sym^m))=0\quad\text{for all}\quad k>0,\ m\geq 0.
\end{align}
We note that this condition is equivalent to 
\[
	H^k((T^{1,0})^*,\sO_{\text{alg}}(\pi^*E^*))=0\quad\text{for all}\quad k>0,
\]
where $\pi:(T^{1,0})^*\to X$ is the holomorphic cotangent bundle considered as algebraic variety, see Remark~\ref{rmk:vanishingcohomology}. For $G$-homogeneous vector bundles $E_\mu=G\times^P E_o$ with simple $P$-module $E_o$, this vanishing condition is a condition on the highest weight $\mu\in\Lambda_P^+$ of $E_o$. It is known that \eqref{eq:vanishingcondition} holds in the following cases:
\begin{itemize}
	\item Any line bundle $E_\mu=G\times^P\CC_\mu$ with dominant $\mu\in\Lambda^+$.
				\cite{Br94}\vspace{2mm}
	\item $P=P_\Pi$ with $|\Pi|=1$ (sometimes called 'minimal parabolic') and dominant
				$\mu\in\Lambda^+$. \cite{Br94,Ha09}\vspace{2mm}
	\item Any $P$ and $\mu=\mu'-2\rho_P$ with $\mu'\in\Lambda^+$ $P$-regular (i.e., 
				$\mu'(H_\alpha)\neq 0$ for all $\alpha\in\Pi$), where $\rho_P$ is the half sum of roots in 
				$\Xi_+$. \cite{Ha09}\vspace{2mm}
	\item $G$ of type $A_n$, any $P$ and dominant regular $\mu\in\Lambda^+$. \cite{Ha09}\vspace{2mm}
\end{itemize}
The detailed analysis of nearly holomorphic sections in \cite{Sc13a} extends this list by the following result:

\begin{proposition}
	Let $X=G/P$ be Hermitian symmetric, i.e., the nilradical $\frakn^+$ of $\frakp$ is abelian. Then
	\eqref{eq:vanishingcondition} is satisfied for any vector bundle $E=G\times^P E_o$ associated 
	to a simple $P$-module $E_o$ with dominant highest weight $\mu\in\Lambda^+$.
\end{proposition}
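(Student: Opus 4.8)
The plan is to verify condition~\eqref{eq:vanishingcondition} directly, i.e.\ that $H^k(X,\sO(E^*\otimes\Sym^m))=0$ for all $k>0$ and $m\geq0$, by exploiting the structural simplification the Hermitian symmetric hypothesis provides and by citing \cite{Sc13a} for the combinatorial heart of the matter. The first point is that, since $\frakn^+$ is abelian, the grading $\frakg=\frakn^-\oplus\frakl\oplus\frakn^+$ satisfies $[\frakn^+,\frakn^-]\subseteq\frakl\subseteq\frakp$; hence the nilradical $\frakn^+$ of $\frakp$ acts trivially on the fibre $\frakg/\frakp\cong\frakn^-$ of the holomorphic tangent bundle, and therefore on each symmetric power $\Sym^m(\frakg/\frakp)$. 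Consequently $E_o^*\otimes\Sym^m(\frakg/\frakp)$ is a $P$-module on which $\frakn^+$ acts trivially, i.e.\ a completely reducible module for the Levi factor $L$. Decomposing it into simple $L$-modules $V_\nu^L$ and applying the Borel--Weil--Bott theorem to each irreducible homogeneous bundle $G\times^P V_\nu^L$, the desired vanishing follows as soon as no constituent has $\nu+\rho$ $W$-regular but outside the dominant chamber: constituents with $\nu+\rho$ singular contribute to no cohomology at all, and those with $\nu+\rho$ dominant regular contribute only in degree $0$.

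It therefore remains to rule out ``regular constituents in the wrong chamber''. The $L$-module $\bigoplus_{m\geq0}\Sym^m(\frakg/\frakp)\cong\CC[\frakn^+]$ decomposes, by the classical Hua--Kostant--Schmid theorem, into a multiplicity-free sum of simple $L$-modules with highest weights explicitly built from the Harish-Chandra strongly orthogonal noncompact roots $\gamma_1,\dots,\gamma_r$. Tensoring with the simple $L$-module $E_o^*$ (whose highest weight is determined by $\mu$) and decomposing produces the finite list of weights $\nu$ appearing at level $m$, and showing that dominance of $\mu\in\Lambda^+$ keeps every $W$-regular constituent inside the dominant chamber, uniformly in $m$, is precisely what the detailed analysis of nearly holomorphic sections on compact Hermitian symmetric spaces in \cite{Sc13a} supplies; I would invoke it at this point. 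An alternative packaging is to feed the explicit $U$-module description of the filtration $\sN^\bullet(X,E^*)$ obtained in \cite{Sc13a}, together with the sheaf exact sequence of Theorem~\ref{thm:exactsequence} and additivity of the sheaf-theoretic Euler characteristic, into a comparison with the graded Euler character of Proposition~\ref{prop:EulerCharacter}; but this still reduces to excluding cohomological cancellation, i.e.\ to the same chamber analysis.

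The main obstacle is exactly this last positivity bookkeeping: one must control, uniformly over $m$ and over all $L$-constituents produced by the Hua--Kostant--Schmid decomposition, the position of $\nu+\rho$ relative to the root hyperplanes, and check that $\mu\in\Lambda^+$ already forces every $W$-regular constituent to be dominant. A secondary, purely technical, source of friction is keeping straight the various dualities and normalizations --- the switch $E\leftrightarrow E^*$, highest versus lowest weights of $E_o^*$, and $P$-dominance versus dominance --- where sign errors are easiest to make.
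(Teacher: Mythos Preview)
Your proposal is correct and follows essentially the same route as the paper: reduce to complete reducibility of the fibre via the abelian nilradical, then invoke \cite{Sc13a} for the weight positivity that feeds into Borel--Weil(--Bott). The paper is simply more terse: it cites \cite[Theorem~3.5]{Sc13a} directly for the statement that all lowest weights of the simple $K$-constituents of $E_o^*\otimes\Sym^m(\frakn^-)$ are antidominant (in the paper's convention), bypassing any explicit mention of the Hua--Kostant--Schmid decomposition or the Borel--Weil--Bott casework you spell out.
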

\begin{proof}
Since $\frakn^+$ is abelian, it follows that $\frakn^+$ acts trivially on the canonical fiber $E_o^*\otimes\Sym^m(\frakn^-)$ of the vector bundle $E^*\otimes\Sym^m$, hence $E_o^*\otimes\Sym^m(\frakn^-)$ is completely reducible $P$-module. By \cite[Theorem~3.5]{Sc13a}, all lowest weights of simple $K$-modules contained in $E^*\otimes\Sym^m(\frakn^-)$ are antidominant (note that the Borel subgroup in \cite{Sc13a} used to define positivity in the root system is the opposite of the one used here). Due to Borel--Weil theorem this shows that all higher cohomology of the vector bundle $E^*\otimes\Sym^m$ vanishes.
\end{proof}
We expect \eqref{eq:vanishingcondition} to be true for arbitrary parabolic $P\subseteq G$ and dominant $\mu\in\Lambda^+$. In any case, assuming \eqref{eq:vanishingcondition}, we show that the Euler character \eqref{eq:EulerCharacter} of the dual bundle $E^*$ coincides with the Hilbert series 
\begin{align*}
	\sH(\sN^\bullet_{E^*},q)
		= \sum_{m\geq 0}\ch(\sN^m(X,E^*)/\sN^{m-1}(X,E^*))\,q^m,
\end{align*}
cf.\ \eqref{eq:HilbertSeries}, and thereby obtain our main result:

\begin{theorem}\label{thm:HilbertSeries}
	Let $X=G/P$ be a generalized flag variety, and let $E_\mu=G\times^PE_o$ be the $G$-homogeneous 
	vector bundle with simple $P$-module $E_o$ of highest weight $\mu\in\Lambda_P^+$. Assume that 
	$E_\mu$ satisfies \eqref{eq:vanishingcondition}. Then
	\[
		\sH(\sN^\bullet_{E_\mu^*}, q)
			= \sum_{\lambda\in\Lambda^+} m_\lambda^\mu(P;q)\,\ch V_\lambda^*.
	\]
\end{theorem}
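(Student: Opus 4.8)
The strategy is to identify the Hilbert series $\sH(\sN^\bullet_{E_\mu^*},q)$ with the graded Euler character $\chi_{\text{gr}}(X,E_\mu^*)$ computed in Proposition~\ref{prop:EulerCharacter}, the bridge being the exact sequence of sheaves \eqref{eq:exactsequence}. First I would invoke Proposition~\ref{prop:EulerCharacter}, which already gives
\[
	\chi_{\text{gr}}(X,E_\mu^*) = \sum_{\lambda\in\Lambda^+} m_\lambda^\mu(P;q)\,\ch V_\lambda^*,
\]
so the whole burden is to show $\sH(\sN^\bullet_{E_\mu^*},q) = \chi_{\text{gr}}(X,E_\mu^*)$ under hypothesis \eqref{eq:vanishingcondition}. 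The key observation is that the vanishing condition \eqref{eq:vanishingcondition} is exactly $H^k(X,\sO(E_\mu^*\otimes\Sym^m))=0$ for all $k>0$ and $m\geq0$; in particular $H^1(X,\sO(E_\mu^*\otimes\Sym^\ell))=0$ for all $\ell$, which is precisely the hypothesis of Corollary~\ref{cor:exactsequence}.

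Next I would run the following induction on $m$. By Corollary~\ref{cor:exactsequence}, for every $m\geq1$ the sequence
\[
	0\longrightarrow \sN^{m-1}(X,E_\mu^*)\longrightarrow \sN^m(X,E_\mu^*)\stackrel{\iCR^m}{\longrightarrow}\sO(X,E_\mu^*\otimes\Sym^m)\longrightarrow 0
\]
is exact, and it is a sequence of $U$-modules (the Cauchy--Riemann operators are $U$-equivariant, as noted in the proof of Theorem~\ref{thm:A}). Hence $\ch(\sN^m(X,E_\mu^*)/\sN^{m-1}(X,E_\mu^*)) = \ch\sO(X,E_\mu^*\otimes\Sym^m)$. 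But by \eqref{eq:vanishingcondition} all higher cohomology of $\sO(E_\mu^*\otimes\Sym^m)$ vanishes, so $\ch\sO(X,E_\mu^*\otimes\Sym^m) = \chi(X,E_\mu^*\otimes\Sym^m)$. Multiplying by $q^m$ and summing over $m\geq0$ (with $\sN^{-1}=0$ and $\sN^0(X,E_\mu^*)=\sO(X,E_\mu^*)$) gives
\[
	\sH(\sN^\bullet_{E_\mu^*},q) = \sum_{m\geq0}\chi(X,E_\mu^*\otimes\Sym^m)\,q^m = \chi_{\text{gr}}(X,E_\mu^*),
\]
and combining with Proposition~\ref{prop:EulerCharacter} finishes the proof.

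There is no serious obstacle left: essentially all the work has been pushed into Theorem~\ref{thm:exactsequence} (exactness of the sheaf sequence), Corollary~\ref{cor:exactsequence} (exactness on global sections under $H^1$-vanishing), and Proposition~\ref{prop:EulerCharacter} (the combinatorial identification with the $P$-version of Lusztig's polynomials). The only point requiring a little care is bookkeeping at the bottom of the filtration — checking that $\sN^0 = \sO(E_\mu^*)$ contributes the $m=0$ term $\ch\sO(X,E_\mu^*)=\chi(X,E_\mu^*)$ correctly, which follows since Borel--Weil gives $\sO(X,E_\mu^*)\cong V_\mu^*$ with no higher cohomology when $\mu$ is dominant, consistent with $m_\lambda^\mu(P;0)=\delta_{\lambda,\mu}$ — and verifying that the maps in the exact sequence are genuinely $U$-module homomorphisms, which is immediate from the explicit equivariant formula in Proposition~\ref{prop:iCRoperatorOnFlagMfds}. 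The hypothesis $\mu\in\Lambda_P^+$ (rather than $\Lambda^+$) is harmless here: dominance of $\mu$ relative to $P$ is what makes $E_o$ a well-defined simple $P$-module, and \eqref{eq:vanishingcondition} is assumed outright, so the argument goes through verbatim.
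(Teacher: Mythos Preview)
Your proposal is correct and follows essentially the same approach as the paper: use the vanishing hypothesis \eqref{eq:vanishingcondition} together with the $U$-equivariant short exact sequence of Corollary~\ref{cor:exactsequence} to identify $\ch(\sN^m/\sN^{m-1})$ with $\ch\sO(X,E_\mu^*\otimes\Sym^m)=\chi(X,E_\mu^*\otimes\Sym^m)$, and then invoke Proposition~\ref{prop:EulerCharacter}. The paper's own proof is just a terser version of exactly this argument; your additional remarks about the $m=0$ bookkeeping and the Borel--Weil consistency check are correct but not strictly needed (the $m=0$ term is handled directly by $\sN^0=\sO(E_\mu^*)$ and \eqref{eq:vanishingcondition} without any appeal to dominance of $\mu$).
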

\begin{proof}
By assumption \eqref{eq:vanishingcondition}, the graded Euler character reduces to
\[
	\chi_{\text{gr}}(X,E_\mu^*) = \sum_{m\geq0} \ch(\sO(X,E_\mu^*\otimes\Sym^m))\,q^m.
\]
Now it suffices to note that the exact sequence in Corollary~\ref{cor:exactsequence} is $U$-equivariant, so for $m>0$ we have isomorphisms of $U$-modules,
\[
	\sO(X,E_\mu^*\otimes\Sym^m)\cong\sN^m(X,E_\mu^*)/\sN^{m-1}(X,E_\mu^*),
\]
and $\sO(X,E_\mu^*)=\sN^0(X,E_\mu^*)$ holds by definition.
\end{proof}

\begin{remark}
	We note that Theorem~\ref{thm:HilbertSeries} provides another proof of Theorem~\ref{thm:A} for 
	vector bundles satisfying the vanishing condition \eqref{eq:vanishingcondition}, without 
	using the explicit form of the Cauchy--Riemann operators: The Hilbert series shows that the 
	space $\sN(X,E_\mu^*)$ of nearly holomorphic sections contains the simple $U$-module 
	$V_\lambda^*$ with multiplicity $m_\lambda^\mu(P;1)$. On the other hand, it is well known due to 
	Frobenius reciprocity and Kostant's branching theorem (see e.g.\ \cite{Kna02}), that the 
	multiplicity of $V_\lambda^*$ in the space of $U$-finite smooth sections in $E_\mu^*$ also 
	coincides with $m_\lambda^\mu(P;1)$. Therefore, any $U$-finite smooth section in $E^\mu$ must be 
	nearly holomorphic. The other inclusion follows from $U$-invariance and finite dimensionality 
	of $\sN^m(X,E_\mu^*)$ for all $m\geq0$.
\end{remark}

As a consequence of the proof of Theorem~\ref{thm:HilbertSeries} we also obtain:
\begin{corollary}\label{cor:SumDecomposition}
	Let $X=G/P$, $E_o$ be a simple $P$-modules, and let $E=G\times^PE_o$ satisfy 
	\eqref{eq:vanishingcondition}. Then there is a $U$-equivariant isomorphism
	\begin{align}\label{eq:Uisomorphism}
		C^\infty(X,E^*)_{U-\text{finite}} \cong\bigoplus_{m\geq0}\sO(X,E^*\otimes\Sym^m).
	\end{align}
\end{corollary}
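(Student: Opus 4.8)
The plan is to assemble the isomorphism from the building blocks already in hand. First I would invoke Theorem~\ref{thm:A}, which identifies $\sN(X,E^*)$ with $C^\infty(X,E^*)_{U-\text{finite}}$; note that $E^*=G\times^PE_o^*$ is again associated to a simple $P$-module, so Theorem~\ref{thm:A} applies verbatim. Thus the left-hand side of \eqref{eq:Uisomorphism} equals $\bigcup_{m\geq0}\sN^m(X,E^*)$ as a $U$-module, and since each $\sN^m(X,E^*)$ is finite dimensional and $U$-invariant (Corollary~\ref{cor:finitedimensionality}), the union is the increasing $U$-stable filtration whose $m$-th graded piece is $\sN^m(X,E^*)/\sN^{m-1}(X,E^*)$.

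The second ingredient is that this filtration splits as a $U$-module: any increasing filtration of a (possibly infinite) direct sum of finite-dimensional $U$-modules by $U$-submodules is isomorphic, as a $U$-module, to the direct sum of its graded pieces, because $U$ is compact and hence every finite-dimensional $U$-module is completely reducible, so each inclusion $\sN^{m-1}(X,E^*)\hookrightarrow\sN^m(X,E^*)$ admits a $U$-equivariant splitting, and one patches these together. This gives a $U$-equivariant isomorphism
\[
	C^\infty(X,E^*)_{U-\text{finite}}\cong\bigoplus_{m\geq0}\sN^m(X,E^*)/\sN^{m-1}(X,E^*),
\]
with the convention $\sN^{-1}(X,E^*)=0$.

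Finally I would feed in the vanishing hypothesis \eqref{eq:vanishingcondition}: since $H^1(X,\sO(E^*\otimes\Sym^\ell))=0$ for all $\ell\geq0$, Corollary~\ref{cor:exactsequence} applies for every $m\geq1$ and its exact sequence is $U$-equivariant, whence $\sN^m(X,E^*)/\sN^{m-1}(X,E^*)\cong\sO(X,E^*\otimes\Sym^m)$ as $U$-modules for $m\geq1$, while $\sN^0(X,E^*)=\sO(X,E^*)$ by definition. Substituting these isomorphisms term by term into the direct sum yields \eqref{eq:Uisomorphism}.

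I expect the only point requiring care to be the splitting step — making precise that a $U$-stable exhaustive filtration of an algebraic (i.e. $U$-finite) representation splits compatibly across all levels; this is standard complete-reducibility bookkeeping for compact groups and poses no real obstacle, but it is the one place where "graded object $\cong$ filtered object" has to be justified rather than quoted. Everything else is a direct citation of Theorem~\ref{thm:A}, Corollary~\ref{cor:finitedimensionality}, and Corollary~\ref{cor:exactsequence}.
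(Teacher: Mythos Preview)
Your proposal is correct and follows the same approach the paper has in mind: the paper states Corollary~\ref{cor:SumDecomposition} simply ``as a consequence of the proof of Theorem~\ref{thm:HilbertSeries}'', and that proof supplies exactly the $U$-equivariant identifications $\sN^m(X,E^*)/\sN^{m-1}(X,E^*)\cong\sO(X,E^*\otimes\Sym^m)$ via Corollary~\ref{cor:exactsequence} under \eqref{eq:vanishingcondition}, which you then combine with Theorem~\ref{thm:A} and complete reducibility to split the filtration. Your write-up is in fact a more careful unpacking of what the paper leaves implicit; the only step you flag as needing care (the $U$-equivariant splitting across all levels) is indeed routine and the paper does not comment on it either.
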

Considering the holomorphic cotangent bundle $\pi:(T^{1,0})^*\to X$ as an algebraic variety, the right hand side of \eqref{eq:Uisomorphism} can also be identified with algebraic sections in the pull-back bundle $\pi^*E^*$ over $(T^{1,0})^*$, see also Remark~\ref{rmk:vanishingcohomology}.

\begin{remark}
	In the case of Hermitian symmetric spaces, the decomposition of the symmetric powers of 
	$\frakn^-$ into simple $K$-modules is well known due to Hua \cite{Hu63}, Kostant, and Schmid 
	\cite{Sc69}. Applying Corollary~\ref{cor:SumDecomposition} to the trivial line bundle 
	$E=X\times\CC$ then provides a connection between the Hua--Kostant--Schmid decomposition and the 
	Cartan--Helgason decomposition of $L^2(X)$ by means of the Borel--Weil theorem.
\end{remark}

\bibliographystyle{amsplain}
\providecommand{\bysame}{\leavevmode\hbox to3em{\hrulefill}\thinspace}
\providecommand{\MR}{\relax\ifhmode\unskip\space\fi MR }
\providecommand{\MRhref}[2]{%
  \href{http://www.ams.org/mathscinet-getitem?mr=#1}{#2}
}
\providecommand{\href}[2]{#2}


\begin{thebibliography}{10}

\bibitem{A06}
A.~Arvanitoyeorgos, \emph{{Geometry of flag manifolds}}, Int. J. Geom. Methods
  Mod. Phys. \textbf{3} (2006), no.~5--6, 957--974.

\bibitem{Br93}
B.~Broer, \emph{{Line bundles on the cotangent bundle of the flag variety}},
  Invent. Math. \textbf{113} (1993), 1--20.

\bibitem{Br94}
\bysame, \emph{{Normality of some nilpotent varieties and cohomology of line
  bundles on the cotangent bundle of the flag variety. Lie theory and
  geometry}}, Progr. Math., Birkhäuser Boston, Boston, MA, 1994.

\bibitem{Bry89}
R.K. Brylinski, \emph{{Limits of weight spaces, Lusztig's q-analogs, and
  fiberings of adjoint orbits}}, J. Amer. Math. Soc. \textbf{2} (1989), no.~3,
  517--533.

\bibitem{EP96}
M.~Engli\v{s} and J.~Peetre, \emph{{Covariant Cauchy-Riemann operators and
  higher Laplacians on Kähler manifolds}}, J. reine angew. Math. \textbf{478}
  (1996), 17--56.

\bibitem{GS69}
P.~Griffiths and W.~Schmid, \emph{{Locally homogeneous complex manifolds}},
  Acta Math. \textbf{123} (1969), no.~1, 253--302.

\bibitem{Gu87}
R.K. Gupta, \emph{{Characters and the $q$-analog of weight multiplicity}}, J.
  London Math. Soc. \textbf{36} (1987), no.~1, 68--76.

\bibitem{Ha09}
C.~Hague, \emph{{Cohomology of flag varieties and the Brylinski--Kostant
  filtration}}, J. Algebra \textbf{321} (2009), 3790--3815.

\bibitem{HJ09}
I.~Heckenberger and A.~Joseph, \emph{{On the left and right Brylinski-Kostant
  filtrations}}, Algebr. Represent. Theory \textbf{12}, no.~2--5.

\bibitem{He76}
W.H. Hesselink, \emph{{Cohomology and the resolution of the nilpotent
  variety}}, Math. Ann. \textbf{223} (1976), no.~3, 249--252.

\bibitem{He80}
\bysame, \emph{{Characters of the Nullcone}}, Math. Ann. \textbf{252} (1980),
  no.~3, 179--182.

\bibitem{Hu63}
L.K. Hua, \emph{{Harmonic analysis of functions of several complex variables in
  the classical domains}}, Translations of Mathematical Monographs, vol.~6,
  American Mathematical Society, Providence, R.I., 1963.

\bibitem{JLZ00}
A.~Joseph, G.~Letzter, and S.~Zelikson, \emph{{On the Brylinski-Kostant
  filtration}}, J. Amer. Math. Soc. \textbf{13} (2000), no.~4, 945--970.

\bibitem{Kna02}
A.~W. Knapp, \emph{{Lie groups beyond an introduction}}, second ed., Progress
  in Mathematics, vol. 140, Birkh\"auser Boston Inc., Boston, MA, 2002.

\bibitem{Kos63}
B.~Kostant, \emph{{Lie group representations on polynomial rings}}, Amer. J.
  Math. \textbf{85} (1963), 327--404.

\bibitem{KZ12}
K.~Koufany and G.~Zhang, \emph{{Hua operators and relative discrete series on
  line bundle over bounded symmetric domains}}, J. Funct. Anal. \textbf{262}
  (2012), no.~9, 4140--4159.

\bibitem{Lu83}
G.~Lusztig, \emph{{Singularities, character formulas, and a $q$-analog of
  weight multiplicities. {\it Analysis and topology on singular spaces, II, III
  (Luminy, 1981)}}}, Ast\'erisque, vol. 101--102, Soc. Math. France, Paris,
  1983.

\bibitem{PPZ90}
J.~Peetre, L.~Peng, and G.~Zhang, \emph{{A weighted Plancherel formula I. The
  case of the unit disc. Applications to Hankel operators}}, Stockholm
  University (1990).

\bibitem{PZ93}
J.~Peetre and G.~Zhang, \emph{{Harmonic analysis on the quantized Riemann
  sphere}}, Internat. J. Math. Sci. \textbf{16} (1993), 225--243.

\bibitem{Pe78}
D.~Peterson, \emph{{A formula for the generalized exponents of representations,
  manuscript}}, M.I.T. (1978).

\bibitem{Sc69}
W.~Schmid, \emph{{Die Randwerte holomorpher Funktionen auf hermitesch
  symmetrischen Räumen}}, Inv. math. \textbf{9} (1969), 61--80.

\bibitem{Sc13b}
B.~Schwarz, \emph{{Existence of nearly holomorphic sections on compact
  Hermitian symmetric spaces}}, Adv. Pure Appl. Math. \textbf{4} (2013), no.~4,
  399--423.

\bibitem{Sc13a}
\bysame, \emph{{Nearly holomorphic sections on compact Hermitian symmetric
  spaces}}, J. Funct. Anal. \textbf{265} (2013), no.~2, 223--256.

\bibitem{S86}
G.~Shimura, \emph{{On a class of nearly holomorphic automorphic forms}}, Ann.
  of Math. (2) \textbf{123} (1986), 347--406.

\bibitem{S87}
\bysame, \emph{{Nearly holomorphic functions on Hermitian symmetric spaces}},
  Math. Ann. \textbf{278} (1987), 1--28.

\bibitem{S90}
\bysame, \emph{{Invariant differential operators on Hermitian symmetric
  spaces}}, Ann. of Math. (2) \textbf{132} (1990), no.~2, 237--272.

\bibitem{S94}
\bysame, \emph{{Differential operators, holomorphic projection, and singular
  forms}}, Duke Math. J. \textbf{76} (1994), no.~1, 141--173.

\bibitem{S00}
\bysame, \emph{{Arithmeticity in the theory of automorphic forms}},
  Mathematical Surveys and Monographs, vol.~82, American Mathematical Society,
  Providence, RI, 2000.

\bibitem{Zh00}
G.~Zhang, \emph{{Invariant differential operators on Hermitian symmetric spaces
  and their eigenvalues}}, Israel J. Math. \textbf{119} (2000), 157--185.

\bibitem{Zh01}
\bysame, \emph{{Shimura invariant differential operators and their
  eigenvalues}}, Math. Ann. \textbf{319} (2001), no.~2, 235--265.

\bibitem{Zh02}
\bysame, \emph{{Nearly holomorphic functions and relative discrete series of
  weighted $L^2$-spaces on bounded symmetric domains}}, J. Math. Kyoto Univ.
  \textbf{42} (2002), 207--221.

\end{thebibliography}

\end{document}